\newtheorem{theorem}{Theorem}
\newtheorem{lemma}[theorem]{Lemma}
\theoremstyle{remark}
\newtheorem{remark}[theorem]{Remark} 
\newtheorem{ex}[theorem]{Example}
\DeclarePairedDelimiter{\norm}{\lVert}{\rVert}
\DeclareMathOperator{\diag}{diag}
\renewcommand*\env@matrix[1][c]{\hskip -\arraycolsep
  \let\@ifnextchar\new@ifnextchar
  \array{*\c@MaxMatrixCols #1}}
\begin{document}

\title{When is a matrix unitary or Hermitian plus low rank?\thanks{The research of the first three authors was partially supported by GNCS projects ``Analisi di matrici sparse e data-sparse: metodi numerici ed applicazioni''; the research of the first two was also supported by University of Pisa
		under the grant PRA-2017-05. The research of the fourth author was supported by the Research
		Council KU Leuven, project C14/16/056 (Inverse-free Rational Krylov Methods: Theory and Applications).}}

\author{Gianna M. Del Corso, Federico Poloni, Leonardo Robol, and\\ 
  Raf Vandebril\footnote{G.\ M.\ Del Corso and F.\ Poloni at University of Pisa, Dept. Computer Science, Italy. \texttt{gianna.delcorso@unipi.it},	\texttt{federico.poloni@unipi.it}. 
  L.\ Robol at University of Pisa, Dept. of Mathematics, and 
     Institute of Information Science and Technologies ``A. Faedo'', ISTI-CNR, Italy.
	 \texttt{leonardo.robol@unipi.it}. 
  R.\ Vandebril at University of Leuven (KU Leuven), Dept. Computer Science, Belgium.
	  \texttt{raf.vandebril@cs.kuleuven.be. }}}
\date{\today}

\maketitle

\begin{abstract}
  Hermitian and unitary matrices are two representatives of the class of normal matrices
  whose full eigenvalue decomposition can be stably computed in quadratic computing
  complexity once the matrix has been reduced, for instance,  to tridiagonal or Hessenberg form. Recently,  fast and reliable eigensolvers dealing with low rank
  perturbations of unitary and Hermitian matrices were proposed.  These structured eigenvalue problems
  appear naturally when computing roots, via confederate linearizations, of polynomials
  expressed in, e.g., the monomial or Chebyshev basis. Often, however, it is not known
  beforehand whether or not a matrix can be written as the sum of an Hermitian or unitary
  matrix plus a low rank perturbation.

  In this paper, we give necessary and sufficient conditions characterizing the class of Hermitian or
  unitary plus low rank matrices. The number of singular values deviating from
  $1$ determines the rank of a perturbation to bring a matrix to unitary form. A similar
  condition holds for Hermitian matrices; the eigenvalues of the skew-Hermitian part
  differing from $0$ dictate the rank of the perturbation. We prove that these relations
  are linked via the Cayley transform. 

 Then, based on these conditions, we identify the closest Hermitian or unitary plus rank $k$ matrix to a given matrix $A$, in Frobenius and spectral norm, and give a formula for their distance from $A$.
 Finally, we present a practical iteration to detect the low rank perturbation. Numerical tests prove that this straightforward algorithm is effective.

\end{abstract}

{\bf Keywords: }{Unitary plus low rank, Hermitian plus low rank, rank structured matrices}

\section{Introduction}
Normal matrices  are computationally amongst the most pleasant matrices to
work with. The fact that their eigenvectors form a full orthogonal set is the basic
ingredient for developing many stable algorithms. Even though generic normal matrices
are less common in practice the unitary and (skew-)Hermitian matrices are prominent members.
Eigenvalue and system solvers for Hermitian \cite{b200,b135} and unitary matrices
\cite{AuMaVaWa14b,m281,p559} have been examined thoroughly and well-tuned
implementations are available in, e.g., \texttt{eiscor}\footnote{
	EISCOR: eigenvalue solvers based on unitary core transformations. \texttt{https://github.com/eiscor/eiscor}.}
and LAPACK\footnote{
	LAPACK: linear algebra package.
	\texttt{https://www.netlib.org/lapack/}.}.
Some matrices are not exactly Hermitian or unitary, yet a low rank perturbation of
those. These matrices are the subject of our study: our aim is to provide a theoretical
characterization of these matrices in terms of their singular- or eigenvalues.

It has been noted that several linearizations of (matrix) polynomials are 
low rank perturbations of unitary or Hermitian matrices (see, e.g., 
\cite{bini2016class,de2013condition,delcorso2018factoring,robol2015exploiting,robol2017framework}
and the references therein). Computing
the roots of these polynomials hence coincides with computing the eigenvalues of the
associated matrices. 
For instance, when computing roots of
polynomials expressed in bases that admit a three-terms recurrence such as
the Chebyshev one, one ends up with the Comrade matrix, which is
symmetric plus rank $1$. Algorithms to solve this problem were developed by
Chandrasekaran and Gu
\cite{n842}; Delvaux and Van Barel \cite{d047}; Del Corso and Vandebril \cite{VaDe09b};
and, one the fastest and most reliable ones is due to Eidelman, Gemignani, and Gohberg
\cite{q763}.
When studying orthogonal
polynomials on the unit circle \cite{b164,b156} we end up with
unitary-plus-low-rank matrices. The companion matrix, whose eigenvalues coincide with the roots of a polynomial in
the monomial basis, is the most popular case. Various algorithms differing with respect to
storage-scheme, compression, explicit or implicit QR algorithms were proposed. We cite few
and refer to the references therein for a full overview: Bini,
Eidelman, Gemignani, and Gohberg \cite{p980}; Van Barel, Vandebril, Van Dooren, and
Frederix \cite{VBVaVDFr08}; Chandrasekaran and Gu \cite{q232}; Boito, Eidelman, and
Gemignani \cite{BoEiGe13} Bevilacqua, Del Corso, Gemignani~\cite{BDG15}; and, a fast and provably stable version, is presented in
the book of Aurentz, Mach, Robol, Vandebril, and Watkins \cite{AuMaRoVaWa18}. 
Extensions for efficiently handling corrections with larger rank, 
necessary to deal with block companion matrices, 
have been presented by Aurentz et al.\
\cite{aurentz2016fast}; Bini and Robol \cite{bini2015quasiseparable}; Gemignani and Robol \cite{gemignani2017fast}; Bevilacqua, Del
Corso, and Gemignani \cite{BDG18}; and, Delvaux \cite{d047}.

In the context of Krylov methods the structure of unitary and Hermitian plus low rank
matrices can be exploited as well. Not only it provides a fast matrix-vector
multiplication, but also the structure of the Galerkin projection profits from to design
faster algorithms.  Huckle \cite{r047} and Huhtanen (see \cite{r058} and the references therein) analyzed the Arnoldi
and Lanczos method for normal matrices. Barth and Manteuffel \cite{BaMa00} examined
classes of matrices resulting in short recurrences and proved that matrices whose adjoint
is a low-rank perturbation of the original matrix allowed short CG recursions; a more
detailed analysis can also be found in the overview of Liesen and Strako\v{s} \cite{LiStra08}. It is
interesting to note that both unitary and Hermitian plus low rank matrices allow for such
recursions. 
Liesen examined in more detail the relation between a matrix and a
rational function of its adjoint \cite{q777}. An alternative manner to devise the short
recurrences was proposed by Beckermann, Mertens, and Vandebril \cite{BemeVa18}. An
algorithm to exploit the short recurrences to develop efficient solvers for Hermitian plus
low rank case is the progressiver GMRES method, proposed by Beckermann and Reichel
\cite{q882}, this method was tuned later on and stabilized by Embree et al. \cite{Em12}.

In this article we characterize unitary and Hermitian plus low rank matrices by
examining their singular- and eigenvalues. We prove that a
matrix having at most $k$
singular values less than $1$ and at most $k$ greater than $1$ is
unitary plus rank $k$. Similarly, by examining the eigenvalues of the skew-Hermitian 
part of a matrix we show that if at most $k$ of these eigenvalues are greater than $0$ and at most $k$ are
smaller than zero, the matrix is Hermitian plus rank $k$.
These characterizations enable us to determine the closest unitary or Hermitian plus rank $k$
matrices in the spectral and Frobenius norms
by setting some well-chosen singular- or eigenvalues  to
$1$ or $0$. We also show that the Cayley transform bridges between the Hermitian and
unitary case. As a proof of concept, we designed and tested a straightforward Lanczos based algorithm to
detect the low-rank part in some test cases.

The article is organized as follows. In Section~\ref{sec:prel} we revisit some
preliminary results. Section~\ref{sec:uni} discusses
necessary and sufficient conditions for a matrix to be unitary plus low rank. 
Constructive proofs furnish the closest unitary plus low rank matrix in spectral and
Frobenius norm. Sections~\ref{sec:herm} and~\ref{sec:dherm} discuss 
the analogue of these results for the Hermitian plus low rank structure.
The Cayley transform, Section~\ref{sec:cayley},
allows us to transform the unitary into the Hermitian problem. 
Algorithms to extract the low rank part from a Hermitian (unitary) plus low rank matrix and some experiments are proposed in Sections~\ref{sec:lanczos} and~\ref{sec:experiments}. We conclude in Section~\ref{sec:conc}.

\section{Preliminaries}
\label{sec:prel}

In this text we make use of the following conventions. The symbols $I$ and $0$ denote the
identity and zero matrix, and may have subscripts denoting their size whenever that is not
clear from the context.  We use $\sigma_1(M) \geq \sigma_2(M) \geq \dots \geq
\sigma_{n}(M)$ to denote the singular values of a matrix $M\in\mathbb{C}^{n\times n}$, and
$\lambda_1(H) \geq \lambda_2(H) \geq \dots \geq \lambda_{n}(H)$ stand for the eigenvalues
of a Hermitian matrix $H\in\mathbb{C}^{n\times n}$. We use the $\diag$ operator which
stacks its arguments, which could be scalars, matrices, or vectors, in a block diagonal matrix (possibly with non-square blocks on its diagonal).

The following results are classical. We rely on them in the forthcoming proofs and for
completeness we have included them.

\begin{theorem}[\protect{Weyl's inequalities, \cite[Theorem~4.3.16 and~Exercise 16, page
423]{HornJohnson}}] \label{thm:hj1} For every pair of matrices $M,N\in\mathbb{C}^{n\times
n}$ and for every $i,j$ such that $i+j\leq n+1$,
\[ \sigma_{i+j-1}(M\pm N) \leq \sigma_i(M) + \sigma_j(N).
\] If $M$, $N$ are Hermitian, then the same inequality holds for their eigenvalues.
\[ \lambda_{i+j-1}(M\pm N) \leq \lambda_i(M) + \lambda_j(N).
\]
\end{theorem}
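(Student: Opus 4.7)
The plan is to establish the eigenvalue statement for Hermitian matrices first via the Courant--Fischer min-max principle, and then derive the singular value version by passing to the Hermitian dilation. Both steps are standard, but the index bookkeeping is where I would proceed carefully.

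For the Hermitian case I would invoke the min-max characterization
\[
\lambda_k(H) \;=\; \min_{\substack{S\subseteq \mathbb{C}^n \\ \dim S = n-k+1}} \; \max_{\substack{x\in S \\ \|x\|=1}} x^*Hx.
\]
Choose subspaces $U$ and $V$, of dimensions $n-i+1$ and $n-j+1$, that realize the outer minima for $\lambda_i(M)$ and $\lambda_j(N)$ respectively. Since both live in $\mathbb{C}^n$, inclusion–exclusion together with the hypothesis $i+j\leq n+1$ yields
\[
\dim(U\cap V) \;\geq\; (n-i+1) + (n-j+1) - n \;=\; n-(i+j-1)+1,
\]
so $U\cap V$ contains a subspace admissible in the outer minimum defining $\lambda_{i+j-1}(M+N)$. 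Any unit vector $x\in U\cap V$ satisfies $x^*Mx\leq \lambda_i(M)$ and $x^*Nx\leq \lambda_j(N)$; adding the two estimates and maximizing over $x$ delivers $\lambda_{i+j-1}(M+N)\leq \lambda_i(M)+\lambda_j(N)$.

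For the singular value inequality I would pass to the Hermitian dilation
\[
\widetilde{M} \;=\; \begin{pmatrix} 0 & M \\ M^* & 0 \end{pmatrix} \;\in\; \mathbb{C}^{2n\times 2n},
\]
whose ordered eigenvalues are $\sigma_1(M)\geq\dots\geq \sigma_n(M)\geq -\sigma_n(M)\geq\dots\geq -\sigma_1(M)$. In particular $\lambda_i(\widetilde{M}) = \sigma_i(M)$ for $i\leq n$, the dilation of $M\pm N$ coincides with $\widetilde{M}\pm\widetilde{N}$, and $\sigma_j(\pm N) = \sigma_j(N)$. Applying the Hermitian statement, already proved, in size $2n$ therefore yields $\sigma_{i+j-1}(M\pm N)\leq \sigma_i(M)+\sigma_j(N)$, provided $i+j-1\leq n$, which is exactly the stated hypothesis.

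The main obstacle is the index bookkeeping: in the Courant--Fischer step one must verify that the hypothesis $i+j\leq n+1$ is exactly what makes $\dim(U\cap V)\geq n-(i+j-1)+1$ positive, so that a subspace of the prescribed dimension sits inside $U\cap V$; and in the dilation step one must check that doubling the size from $n$ to $2n$ does not silently change which eigenvalues of $\widetilde{M}\pm\widetilde{N}$ represent singular values of $M\pm N$, i.e.\ that the index $i+j-1$ lands in the positive-spectrum half of the dilation. Beyond inclusion–exclusion and the min-max principle, no further estimates are required.
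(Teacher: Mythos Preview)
Your argument is correct and is precisely the standard route (Courant--Fischer for the Hermitian case, then the Jordan--Wielandt dilation for singular values). Note, however, that the paper does not actually prove this statement: it is quoted as a classical result with a citation to Horn and Johnson, so there is no ``paper's own proof'' to compare against. Your write-up would serve perfectly well as the omitted justification; the index bookkeeping you flag is handled correctly, and the only cosmetic point is that the Hermitian inequality is really a statement about $M+N$ (the $\pm$ in the eigenvalue line of the paper is loose, whereas for singular values it is genuine since $\sigma_j(-N)=\sigma_j(N)$).
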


\begin{theorem}[\protect{Interlacing inequalities, \cite[Theorems~4.3.4]{HornJohnson}} and \cite{Thompson}] 
\label{thm:hj2}
Let $M\in\mathbb{C}^{n\times n}$ and $N\in\mathbb{C}^{n\times (n-k)}$ be
a submatrix of $M$ obtained by removing $k$ columns from it. Then,
\[ \sigma_{i+k}(M) \leq \sigma_i(N) \leq \sigma_i(M).
\]
In the Hermitian case we get similar inequalities.
 Let $M \in \mathbb{C}^{n\times n}$ be Hermitian and
$N\in\mathbb{C}^{(n-k)\times(n-k)}$ be a (Hermitian) principal submatrix of $M$. Then,
\[ \lambda_{i+k}(M) \leq \lambda_i(N) \leq \lambda_i(M).
\]
\end{theorem}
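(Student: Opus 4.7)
The plan is to prove the Hermitian (Cauchy-type) interlacing first via the Courant--Fischer min-max characterization, and then reduce the singular value statement to it by passing to $M^*M$.

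For the Hermitian case I would invoke
\[
 \lambda_i(H) = \max_{\substack{V \subseteq \mathbb{C}^n \\ \dim V = i}} \min_{\substack{v \in V \\ v \neq 0}} \frac{v^* H v}{v^* v}.
\]
Writing the principal submatrix as $N = J^* M J$ for a tall matrix $J \in \mathbb{C}^{n \times (n-k)}$ whose columns are $n-k$ distinct standard basis vectors, and setting $W = \mathrm{range}(J) \subseteq \mathbb{C}^n$, the map $v \mapsto Jv$ is an isometry that preserves Rayleigh quotients, identifying subspaces of $\mathbb{C}^{n-k}$ with subspaces of $W$. For the upper bound $\lambda_i(N) \leq \lambda_i(M)$, I would push an optimal $i$-dimensional subspace for $N$ into $W$ through $J$; it is admissible for the max-min defining $\lambda_i(M)$ and still attains the value $\lambda_i(N)$. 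For the lower bound $\lambda_{i+k}(M) \leq \lambda_i(N)$, the key step is dimension counting: any $(i+k)$-dimensional subspace $V' \subseteq \mathbb{C}^n$ meets $W$ in dimension at least $(i+k)+(n-k)-n=i$, hence contains some $i$-dimensional $U' \subseteq W$. Restricting the Rayleigh quotient of $M$ from $V'$ to $U'$ can only raise its minimum, and on $U'$ it coincides with that of $N$ on the corresponding subspace $J^* U' \subseteq \mathbb{C}^{n-k}$. Taking the max over $V'$ yields $\lambda_{i+k}(M) \leq \lambda_i(N)$.

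For the singular values, $N$ is obtained by selecting $n-k$ columns of $M$, so $N = M P$ for $P \in \mathbb{C}^{n \times (n-k)}$ consisting of the corresponding standard basis columns. Then
\[
 N^* N = P^* (M^* M) P
\]
is an $(n-k) \times (n-k)$ principal submatrix of the Hermitian positive-semidefinite matrix $M^* M$. Applying the Hermitian interlacing just proved to the pair $(M^*M, N^*N)$, and using the identity $\sigma_j(A)^2 = \lambda_j(A^* A)$, I would take square roots of the three non-negative quantities in the chain of inequalities to conclude $\sigma_{i+k}(M) \leq \sigma_i(N) \leq \sigma_i(M)$.

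The only genuinely non-trivial ingredient is the dimension count $(i+k)+(n-k)-n = i$ underlying the lower Hermitian bound; everything else is a routine translation between the min-max formulations and between singular values and eigenvalues of $M^*M$.
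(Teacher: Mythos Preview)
Your argument is correct and is essentially the standard textbook proof via the Courant--Fischer min--max principle. Note, however, that the paper itself does not supply a proof of this statement: it is listed among the classical preliminaries and simply cited from Horn--Johnson and Thompson. So there is no ``paper's own proof'' to compare against; your write-up just fills in the reference. The reduction of the singular-value case to the Hermitian case via $N^*N = P^*(M^*M)P$ is exactly the route taken in the cited sources, and your dimension-count step for the lower bound is the crux of the Cauchy interlacing proof as given in \cite[Theorem~4.3.4]{HornJohnson}.
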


Moreover, recall that $M\in\mathbb{C}^{n\times n}$ is unitary if and only if
$\sigma_i(M)=1$, $\forall i=1,\ldots, n$.

\section{Detecting  unitary-plus-rank-$k$ matrices}
\label{sec:uni}

We call $\mathcal{U}_k$ the set of unitary-plus-rank-$k$ matrices, i.e., $A\in
\mathcal{U}_k$ if and only if there exists a unitary matrix $Q$ and two 
 skinny matrices $G,B\in\mathbb{C}^{n\times k}$ such that $A = Q + GB^*$. 
This implies that $\mathcal{U}_k \subseteq \mathcal{U}_{k+1}$ for any $k$. 

\begin{theorem} \label{thm:mainortho}
Let $A \in \mathbb{C}^{n\times n}$ and $0 \leq k \leq n$.
Then, $A \in \mathcal{U}_k$ if and only if $A$ has at most $k$ singular values strictly greater than $1$ and at most $k$ singular values strictly smaller than $1$.
\end{theorem}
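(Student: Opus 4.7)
I would prove the two directions separately. For necessity, Weyl's inequalities (Theorem~\ref{thm:hj1}) do the work: assume $A = Q + GB^*$ with $Q$ unitary and $\operatorname{rank}(GB^*) \leq k$, so $\sigma_{k+1}(GB^*) = 0$, and apply Theorem~\ref{thm:hj1} with $j = k+1$ twice. From $A = Q + GB^*$, the inequality reads $\sigma_{i+k}(A) \leq \sigma_i(Q) + \sigma_{k+1}(GB^*) = 1$ for $1 \leq i \leq n-k$, so at most $k$ singular values of $A$ exceed $1$. From $Q = A - GB^*$, it reads $1 = \sigma_{i+k}(Q) \leq \sigma_i(A) + \sigma_{k+1}(-GB^*) = \sigma_i(A)$ for $1 \leq i \leq n-k$, so at most $k$ singular values fall below $1$.

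For sufficiency, I would use an SVD $A = U\Sigma V^*$. Looking for $Q$ of the form $U \hat Q V^*$ with $\hat Q$ unitary reduces the task to constructing a unitary $\hat Q$ with $\operatorname{rank}(\Sigma - \hat Q) \leq k$, where $\Sigma$ is diagonal with $p \leq k$ entries strictly above $1$ and $q \leq k$ entries strictly below $1$. I would build $\hat Q$ block-diagonally by pairing: for $i = 1, \ldots, \min(p,q)$, the $i$-th largest singular value of $A$ (which is $>1$) is paired with the $i$-th smallest (which is $<1$). On the resulting $2 \times 2$ block $\diag(a,b)$ with $a > 1 > b$, a direct computation shows that the real rotation $R_\theta$ with $\cos\theta = (ab+1)/(a+b)$ satisfies $|\cos\theta| \leq 1$ precisely because $(a-1)(b-1) \leq 0$, and makes $\diag(a,b) - R_\theta$ singular, hence of rank one. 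On the remaining indices I would set $\hat Q$ to the identity: each unpaired index with $\sigma_j = 1$ contributes zero to the rank, while each of the $|p-q|$ unpaired entries with $\sigma_j \neq 1$ contributes one. Summing gives $\operatorname{rank}(\Sigma - \hat Q) \leq \min(p,q) + |p - q| = \max(p,q) \leq k$.

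The step that I expect to require the most care is the $2 \times 2$ reduction in the sufficiency direction: the equivalence $(a-1)(b-1) \leq 0 \Leftrightarrow |(ab+1)/(a+b)| \leq 1$ is its crux, and the same identity forces the large-with-small pairing strategy (pairing two sub-unit or two super-unit singular values fails to admit a rank-dropping real-rotation correction). Everything else — assembling the pieces into a global block unitary and tallying ranks — is routine bookkeeping.
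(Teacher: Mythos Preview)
Your proposal is correct and follows essentially the same route as the paper. The necessity argument via Weyl's inequalities is identical, and your sufficiency construction---pair one super-unit with one sub-unit singular value, correct each $2\times 2$ block $\diag(a,b)$ by the rotation with $\cos\theta=(ab+1)/(a+b)$, and handle leftovers as $1\times 1$ blocks---is exactly the content of the paper's Lemma~\ref{lem:2x2} and the block-diagonal assembly in the proof of Theorem~\ref{thm:mainortho}; even the formula for $\cos\theta$ and the resulting bound $\max(p,q)$ coincide with the paper's $c$ and $\ell=\max\{k_-,k_+\}$.
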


Before proving this result, we point out that looking at the singular values is a good
guess, since 
being unitary-plus-rank-$k$
is invariant under unitary equivalence transformations.

\begin{remark} \label{rem:conjugation}
$A= Q + GB^* \in \mathcal{U}_k$ if and only if $U^* A V \in \mathcal{U}_k$ for any unitary matrices $U,V$. Indeed,
we have that
\[U^* A V = \underbrace{U^* Q V}_{\hat{Q}} + \underbrace{U^* G}_{\hat{G}} \underbrace{B^*
  V}_{\hat{B}^*}\in \mathcal{U}_k.
\]
\end{remark}

We start proving a simple case ($n=2,k=1$) of Theorem~\ref{thm:mainortho}, which will
act as a building block for the general proof.

\begin{lemma} \label{lem:2x2}
For every pair of real numbers $\sigma_1$ and $\sigma_2$ such that $\sigma_1 \geq 1 \geq \sigma_2
\geq 0$, we have
\[
\begin{bmatrix}
    \sigma_1 & 0\\
    0 & \sigma_2
\end{bmatrix} \in \mathcal{U}_1.
\]
\end{lemma}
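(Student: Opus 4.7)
The goal is to exhibit a unitary matrix $Q$ and vectors $g,b\in\mathbb{C}^2$ realizing $D:=\diag(\sigma_1,\sigma_2)=Q+gb^*$. Since $D-Q$ is $2\times 2$, the condition that it has rank at most one is equivalent to $\det(D-Q)=0$. My plan is to search for a suitable $Q$ in the one-parameter family of real rotations
\[
Q(\theta)=\begin{bmatrix}\cos\theta & -\sin\theta\\ \sin\theta & \cos\theta\end{bmatrix},
\]
and determine $\theta$ so that this scalar equation is satisfied.

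First, I would expand
\[
\det(D-Q(\theta)) = (\sigma_1-\cos\theta)(\sigma_2-\cos\theta)+\sin^2\theta = 1+\sigma_1\sigma_2-(\sigma_1+\sigma_2)\cos\theta.
\]
Setting this to zero reduces the whole problem to the single equation
\[
\cos\theta=\frac{1+\sigma_1\sigma_2}{\sigma_1+\sigma_2}.
\]

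Next I would verify that the right-hand side lies in $[-1,1]$, so that a real $\theta$ exists. The denominator is strictly positive because $\sigma_1\geq 1$, and the numerator is nonnegative, giving $\cos\theta\geq 0$. The crucial upper bound $\cos\theta\leq 1$ is equivalent to $\sigma_1+\sigma_2-1-\sigma_1\sigma_2\geq 0$, and a short manipulation shows that this quantity equals $-(1-\sigma_1)(1-\sigma_2)$, which is nonnegative precisely because $\sigma_1\geq 1\geq \sigma_2$ makes the two factors have opposite signs. With such $\theta$ chosen, $Q(\theta)$ is unitary and $D-Q(\theta)$ has rank at most $1$, hence factors as $gb^*$ for some $g,b\in\mathbb{C}^2$, placing $D$ in $\mathcal{U}_1$.

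There is essentially no hard step: once one restricts to plane rotations, the construction collapses to a single trigonometric equation, and the only thing to check is the solvability inequality, which is a one-line algebraic identity made tight by the hypothesis $\sigma_1\geq 1\geq \sigma_2\geq 0$. The only minor point worth flagging is that the boundary cases (e.g., $\sigma_1=\sigma_2=1$, or $\sigma_2=0$) still fall within the range, so no separate treatment is needed.
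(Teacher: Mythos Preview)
Your proof is correct and follows essentially the same approach as the paper: both restrict to real plane rotations and solve for the cosine, arriving at the identical value $\cos\theta = c = \tfrac{1+\sigma_1\sigma_2}{\sigma_1+\sigma_2}$. The only cosmetic difference is that the paper writes out the rank-$1$ correction explicitly (deriving $a,b,s$ from the constraints $c^2+s^2=1$ and $s^2=ab$), whereas you invoke the equivalent $2\times 2$ criterion $\det(D-Q)=0$.
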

\begin{proof}
We prove that the diagonal $2 \times 2$ matrix can be decomposed as 
a plane rotation plus a rank $1$ correction. In particular, 
we look for $c,s,a,b \geq 0$ such that
\[
	\begin{bmatrix}
	    \sigma_1 & 0\\ 0 & \sigma_2
	\end{bmatrix} =
	\begin{bmatrix}[r]
	    c & s\\
	    -s & c
	\end{bmatrix}
	+
	\begin{bmatrix}
	    a & -s\\
	    s & -b
	\end{bmatrix},
\]
i.e., $\sigma_1 = c+a$ and $\sigma_2 = c-b$. In addition, we impose that
the first summand is unitary (i.e., $c^2+s^2=1$), and that the second has rank 1 (i.e., $s^2=ab$). A simple computation shows that
\[
c = \frac{\sigma_1 \sigma_2+1}{\sigma_1 + \sigma_2}, \quad a =
\frac{\sigma_1^2-1}{\sigma_1+\sigma_2}, \quad b = \frac{1-\sigma_2^2}{\sigma_1 +
  \sigma_2},\;\; \mbox{and}\;\;  s = \sqrt{ab}
\]
satisfy these conditions.
\end{proof}

We are now ready to prove Theorem~\ref{thm:mainortho}.

\begin{proof}[Proof  of Theorem~\ref{thm:mainortho}]

First note that the case $k=n$ is trivial
$\mathcal{U}_{n}=\mathbb{C}^{n\times n}$.
So we assume $k<n$. The conditions on the singular values can be written as  two inequalities 
\begin{equation} \label{inequalities}
1 \geq \sigma_{k+1}(A) \quad \text{and} \quad \sigma_{n-k}(A) \geq 1.
\end{equation}
\begin{itemize}
\item 
  We first show that if $A\in\mathcal{U}_k$ then the inequalities~\eqref{inequalities}
  hold. Suppose that $A=Q + GB^*$. Then, by Theorem~\ref{thm:hj1},
  \[
  \sigma_{k+1}(A) \leq \sigma_1(Q) + \sigma_{k+1}(GB^*) = 1 + 0 = 1.
  \]
  And, again following from Theorem~\ref{thm:hj1},
  \[
  1 = \sigma_n(Q) \leq \sigma_{n-k}(A) + \sigma_{k+1}(GB^*) = \sigma_{n-k}(A).
  \]

\item We now prove the reverse implication, i.e., if the two
  inequalities~\eqref{inequalities} hold then $A\in\mathcal{U}_k$.  To simplify things, we
  introduce $k_{-}$ denoting the number of singular values smaller than $1$ and $k_{+}$
  standing for the number of singular values larger than $1$.  The conditions state that
  $\ell=\max\{k_{-},k_{+}\}\leq k$.  We will prove that $A\in\mathcal{U}_{\ell}$. Note
  that $\mathcal{U}_\ell\subseteq\mathcal{U}_k$.
  Let $h=\min\{k_{-},k_{+}\}$.

We reorder the diagonal elements of $\Sigma$ to group the singular values into separate
diagonal blocks of  three types:
  \begin{itemize}
  \item Diagonal blocks $\Sigma_1,\Sigma_2,\dots,\Sigma_{h}$ of size $2\times 2$,
    containing each one singular value larger than $1$ and one smaller than $1$. Since
    $h=\min\{k_{-},k_{+}\}$ either all singular values smaller than $1$ or all singular
    values larger than $1$ are incorporated in these blocks.
  \item Diagonal blocks $\Sigma_{h+1},\dots,\Sigma_{\ell}$ of size $1\times 1$ containing
    the remaining singular values different from $1$. Note that all these blocks will contain
    either singular values that are larger than $1$ or smaller than $1$, depending on
    whether $h=k_{-}$ or $h=k_{+}$. In case
    $k_{-}=k_{+}=h=\ell$ there will be no blocks of this type.
  \item One final block equal to the identity matrix of size $m = n-h-\ell=n-k_{-}-k_{+}$, containing
    all the singular values equal to $1$.
  \end{itemize}
  For example,for $A$ having singular values $(3,2,2,1.3,1.2,1,1,1,0.6,0.2)$,  we
  can take $\Sigma_1=\diag(3,0.2)$, $\Sigma_2=\diag(2,0.6)$, $\Sigma_3=2$, $\Sigma_4=1.3$,
  $\Sigma_5=1.2$, and $\Sigma_6=I_3$.

  Clearly, this decomposition always exists, and although it is not unique, the number and
  types of blocks are. 

  For each $i=1,2,\dots,\ell$, the matrix $\Sigma_i$ is unitary plus rank $1$. This follows
  from  Lemma~\ref{lem:2x2} for $2\times 2$ blocks, and is trivial for the $1\times 1$ blocks.
Hence for each $i$ we can write $\Sigma_i = Q_i + g_i b_i^*$,
  where $Q_i$ is unitary and $g_i,b_i$ are vectors. So we have
  \[
  \diag(\Sigma_1,\Sigma_2,\dots,\Sigma_\ell,I_m) = \diag(Q_1,Q_2,\dots,Q_\ell,I_m) + GB^*,
  \]
  with
  \[
  G = \begin{bmatrix}
    \diag(g_1,g_2,\dots,g_\ell)\\
    0_{m\times \ell}
  \end{bmatrix}, \quad\mbox{ and }\quad B = \begin{bmatrix}
    \diag(b_1,b_2,\dots,b_\ell)\\
    0_{m\times \ell}
  \end{bmatrix}.
  \]
  Therefore $\diag(\Sigma_1,\Sigma_2,\dots,\Sigma_\ell,I) \in \mathcal{U}_{\ell}$. 
Since this
  matrix can be obtained from a unitary equivalence on
$A$ (singular value decomposition and a permutation) we have 
  $A \in \mathcal{U}_\ell$. 
\end{itemize}\end{proof}

\begin{ex}
The matrix $U\operatorname{diag}(3,2,1,1,1,0.5)V^*$ belongs to $\mathcal{U}_2$ (but not to $\mathcal{U}_1$). The matrix $U\operatorname{diag}(5,0.4,0.3,0.2)V^*$ belongs to $\mathcal{U}_3$ (but not to $\mathcal{U}_2$). The matrix $5I_4$ belongs to $\mathcal{U}_4$ (but not to $\mathcal{U}_3$).
\end{ex}

\section{Distance from unitary plus rank $k$}
\label{sec:duni}

Theorem~\ref{thm:mainortho} provides an effective criterion to characterize matrices in
$\mathcal U_k$ based on the singular values. One of the main features of the singular
value decomposition is that it automatically provides the optimal rank $k$ approximation
of any matrix, in the sense of the $2$-and the Frobenius norm. In this section, we show
that the criterion of Theorem~\ref{thm:mainortho} can be used to compute the best
unitary-plus-rank-$k$ approximant for any value of $k$.

The problem is thus to find a matrix in $\mathcal{U}_k$ that minimizes the distance to a
given matrix $A$.
This can be achieved by setting the supernumerary singular values preventing the
inequalities~\eqref{inequalities} from being satisfied to $1$.

\begin{theorem} \label{thm:distanceUk}
Let the matrix $A\in\mathbb{C}^{n\times n}$ have singular value decomposition $U\Sigma V^*$, and 
denote by $\hat A= U \widehat{\Sigma} V^*$,
where $\widehat{\Sigma}$ is the diagonal matrix with elements
\[
\hat{\sigma}_i = \begin{cases}
1 & \mbox{ if } k< i \leq k_+, \mbox{ or }  n-k_- < i \leq n-k,\\
\sigma_i & \text{otherwise}
\end{cases}
\]
with $k_+$ (resp. $k_-$)  the number of singular values of $A$ strictly greater
(resp. smaller) than $1$, we have that
\[
\min_{X \in \mathcal{U}_k} \norm{A - X}_2 = \norm{A- \hat A}_2= \max\{0, \sigma_{k+1} - 1, 1-\sigma_{n-k}\}.
\]

\end{theorem}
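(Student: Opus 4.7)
The plan is to prove the theorem in two matching halves: (a) verify that $\hat{A}$ lies in $\mathcal{U}_k$ and compute $\norm{A - \hat{A}}_2$ explicitly, and (b) establish a matching lower bound $\norm{A - X}_2 \geq \max\{0, \sigma_{k+1}-1, 1-\sigma_{n-k}\}$ for every competitor $X \in \mathcal{U}_k$.

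For step (a), membership of $\hat{A}$ in $\mathcal{U}_k$ is an immediate consequence of Theorem~\ref{thm:mainortho}: by construction, $\widehat{\Sigma}$ has at most $k$ diagonal entries strictly greater than $1$ (those in positions $1,\dots,\min(k,k_{+})$) and at most $k$ strictly smaller than $1$ (those in positions $\max(n-k,n-k_{-})+1,\dots,n$), because all the supernumerary deviations from $1$ have been explicitly lifted to $1$. The distance $\norm{A - \hat{A}}_2$ is then read off directly: $A - \hat{A} = U(\Sigma - \widehat{\Sigma})V^{*}$ is (up to unitary equivalence) diagonal, with the only nonzero entries being $\sigma_i - 1 > 0$ for $k < i \leq k_+$ and $\sigma_i - 1 < 0$ for $n-k_- < i \leq n-k$. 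Since the $\sigma_i$ are in decreasing order, the largest absolute value among these entries is precisely $\max\{\sigma_{k+1}-1,\,1-\sigma_{n-k}\}$ whenever the corresponding ranges are nonempty, and $0$ otherwise.

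For step (b), I would appeal to Weyl's perturbation inequality for singular values, which follows from Theorem~\ref{thm:hj1} by setting $j=1$: $|\sigma_i(A) - \sigma_i(X)| \leq \norm{A-X}_2$ for every $i$. Combined with the necessity direction of Theorem~\ref{thm:mainortho} applied to $X \in \mathcal{U}_k$, which gives $\sigma_{k+1}(X) \leq 1 \leq \sigma_{n-k}(X)$, this yields both
\[
\norm{A-X}_2 \geq \sigma_{k+1}(A) - \sigma_{k+1}(X) \geq \sigma_{k+1}(A) - 1
\]
and, symmetrically, $\norm{A-X}_2 \geq \sigma_{n-k}(X) - \sigma_{n-k}(A) \geq 1 - \sigma_{n-k}(A)$. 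Taking the maximum with $0$ (which is itself trivial since $\norm{A-X}_2 \geq 0$) furnishes exactly the lower bound that matches the upper bound from step (a).

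I do not anticipate any serious obstacle: the heavy structural work has already been absorbed into Theorem~\ref{thm:mainortho}, and Weyl's inequality is tailor-made to transfer perturbations between singular values. The only care required is bookkeeping in the degenerate cases where $k_{+} \leq k$ or $k_{-} \leq k$, so that one or both index ranges in the definition of $\widehat{\Sigma}$ are empty and the corresponding term in the final max vanishes; in particular, when both hold, $A$ is already in $\mathcal{U}_k$ and the distance is $0$. This is a short case analysis that does not affect the structure of the argument.
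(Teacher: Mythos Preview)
Your proposal is correct and follows essentially the same route as the paper: membership of $\hat A$ in $\mathcal{U}_k$ via Theorem~\ref{thm:mainortho}, the explicit computation of $\norm{A-\hat A}_2$ from the diagonal difference $\Sigma-\widehat\Sigma$, and the matching lower bound from Weyl's inequality together with the singular-value constraints $\sigma_{k+1}(X)\le 1\le\sigma_{n-k}(X)$ for any $X\in\mathcal{U}_k$. The paper's proof is slightly terser about the degenerate cases you flag, but the logic is identical.
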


\begin{proof}
The matrix $\hat A = U \widehat{\Sigma} V^*$ clearly belongs to $\mathcal{U}_k$ by
Theorem~\ref{thm:mainortho}, hence $\norm{A-U\widehat{\Sigma}V^*}_2 = \norm{\Sigma - \widehat{\Sigma}}_2 = \max\{\sigma_{k+1} - 1, 1-\sigma_{n-k}, 0\}$.
It remains thus to
prove that for every 
$X\in\mathcal{U}_k$ one has $\norm{A-X}_2 \geq \norm{A - \hat A}_2$. 

By Weyl's inequality (Theorem~\ref{thm:hj1}), we have that
$\sigma_{k+1}(A) \leq \sigma_1(A - X) + \sigma_{k+1}(X)$, 
and therefore 
\[
  \norm{A - X}_2 \geq  \sigma_{k+1}(A) - \sigma_{k+1}(X)\ge \sigma_{k+1}(A)-1. 
\]
Similarly from $\sigma_{n-k}(X) \leq \sigma_1(A - X) + \sigma_{n-k}(A)$, we have
\[
\norm{A - X}_2 \geq  \sigma_{n-k}(X) - \sigma_{n-k}(A)\ge 1-\sigma_{n-k}(A). 
\]
We obtain 
\[
  \norm{A - X}_2 \geq \max \{0, \sigma_{k+1} - 1, 1 - \sigma_{n-k} \} = \norm{A - \hat A}_2.
\]
\end{proof}

Theorem~\ref{thm:distanceUk} provides a deterministic construction 
for a minimizer of $\norm{A - X}_2$, but this minimizer is not unique. 
This is demonstrated in the following example. 

\begin{ex}
Let us consider, for arbitrary unitary matrices $U, V$, the matrix $A$ defined as
\[
  A = U \begin{bmatrix}
  2 \\
  & 1.5 \\
  && 1 \\
  &&& 1 \\
  &&&& .5 \\
  \end{bmatrix}V^*. 
\]
We know, from  Theorem~\ref{thm:mainortho} that $A \in \mathcal U_2$. 
We want to determine the distance 
from $A$ to $\mathcal U_1$. Theorem~\ref{thm:distanceUk} yields the
approximant $\hat A$ defined as follows: 
\[
  \hat A := U \begin{bmatrix}
    2 \\
  & 1 \\
  && 1 \\
  &&& 1 \\
  &&&& .5 \\
  \end{bmatrix} V^*, \qquad 
  \norm{A - \hat A}_2 = 0.5. 
\]
However, the solution is not unique. For instance, the family of matrices determined as 
\[
  \hat A(t, s) := U \begin{bmatrix}
    2 + t \\
    & 1 \\
    && 1 \\
    &&& 1 \\
    &&&& .5 + s
  \end{bmatrix} V^*
\]
still lead to $\norm{A - \hat A(t,s)}_2 = 0.5$ for $t, s \in [ -0.5, 0.5 ]$. 
\end{ex}

The same matrix is a minimizer also in the Frobenius norm.
\begin{theorem}
	Let the matrix $A\in\mathbb{C}^{n\times n}$ have singular value decomposition $U\Sigma V^*$ and denote by $\hat{A}=U \widehat{\Sigma} V^*$
	where $\widehat{\Sigma}$ is the diagonal matrix with  elements
	\[
	\hat{\sigma}_i = \begin{cases}
	1 & \mbox{ if } k< i \leq k_+, \mbox{ or }  n-k_- < i \leq n-k,\\
	\sigma_i & \text{otherwise}
	\end{cases}
	\]
	where $k_+$ (resp. $k_-$) is the number of singular values of $A$ which are strictly
	greater (resp. smaller) than $1$. Then we have 
		\[
\min_{X \in \mathcal{U}_k} \norm{A - X}_F = \norm{A-\hat A}_F,
	\]
	and moreover
	\begin{equation}
	\label{eq:norm:frob}
	\|A-\hat{A}\|_F^2 = \sum_{i=k+1}^{k_+} (\sigma_i-1)^2 + \sum_{i=n-k_-+1}^{n-k} (\sigma_i-1)^2.
	\end{equation}

\end{theorem}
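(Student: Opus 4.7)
The plan is to follow essentially the same template as the proof of Theorem~\ref{thm:distanceUk}, replacing Weyl's inequality by its Frobenius-norm counterpart. The natural tool is Mirsky's theorem, which states that for any two matrices $A,X \in \mathbb{C}^{n \times n}$,
\[
\sum_{i=1}^n (\sigma_i(A) - \sigma_i(X))^2 \leq \norm{A - X}_F^2.
\]
I would first verify feasibility and evaluate the candidate: by construction $\hat A$ has at most $k$ singular values strictly larger than $1$ and at most $k$ strictly smaller than $1$, so Theorem~\ref{thm:mainortho} yields $\hat A \in \mathcal U_k$. Unitary invariance of the Frobenius norm then gives $\norm{A - \hat A}_F^2 = \norm{\Sigma - \widehat \Sigma}_F^2$, and expanding the diagonal differences produces exactly~\eqref{eq:norm:frob}.

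For the matching lower bound I would take an arbitrary $X \in \mathcal U_k$. By Theorem~\ref{thm:mainortho} we have $\sigma_{k+1}(X) \leq 1 \leq \sigma_{n-k}(X)$; since the singular values of $X$ are sorted in non-increasing order, this squeezes $\sigma_i(X) = 1$ for every $k+1 \leq i \leq n-k$. Substituting into Mirsky's inequality and discarding the non-negative contributions from the outer indices $i \leq k$ and $i > n-k$ gives
\[
\norm{A - X}_F^2 \geq \sum_{i=k+1}^{n-k} (\sigma_i(A) - 1)^2.
\]

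To finish, I would split this sum at the thresholds $k_+$ and $n - k_-$; since $\sigma_i(A) = 1$ precisely in the middle block $k_+ < i \leq n - k_-$, the central contribution vanishes and what remains are exactly the two tail sums appearing in~\eqref{eq:norm:frob}. The main (and essentially only) obstacle I foresee is that Mirsky's inequality is not listed among the preliminaries of Section~\ref{sec:prel}, so it would either need to be recalled inline or added there; once this classical fact is available, the argument is a direct computation strictly parallel to the spectral-norm case of Theorem~\ref{thm:distanceUk}.
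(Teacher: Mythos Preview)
Your Mirsky-based approach is genuinely different from the paper's, which partitions $U^*XV$ into three column blocks (according to whether $\sigma_i(A)>1$, $=1$, or $<1$) and combines the interlacing inequalities (Theorem~\ref{thm:hj2}) with Weyl's inequalities (Theorem~\ref{thm:hj1}) to lower-bound the singular values, and hence the Frobenius norm, of each block of the perturbation separately. Your route is more direct, and once Mirsky's inequality is admitted the argument is essentially one line; adding that classical result to the preliminaries is no real obstacle.

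There is, however, a gap in your discarding step. The squeeze $\sigma_{k+1}(X)\leq 1\leq\sigma_{n-k}(X)$ only pins down $\sigma_i(X)=1$ for $k+1\leq i\leq n-k$, and after dropping the outer indices your lower bound becomes $\sum_{i=k+1}^{n-k}(\sigma_i(A)-1)^2$. But the index ranges $[k+1,k_+]$ and $[n-k_-+1,n-k]$ in~\eqref{eq:norm:frob} need not lie inside $[k+1,n-k]$: take $n=4$, $k=2$, $A=5I_4$ (so $k_+=4$, $k_-=0$), where your sum is empty while the true minimum is $2(5-1)^2=32$. The argument breaks whenever $k_+>n-k$ or $k_->n-k$. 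The fix is small: do not squeeze, but keep the one-sided information $\sigma_i(X)\leq 1$ for every $i\geq k+1$ and $\sigma_i(X)\geq 1$ for every $i\leq n-k$. Then for each $i\in[k+1,k_+]$ one has $\sigma_i(A)>1\geq\sigma_i(X)$, hence $(\sigma_i(A)-\sigma_i(X))^2\geq(\sigma_i(A)-1)^2$, and symmetrically on $[n-k_-+1,n-k]$; since these two index sets are disjoint (because $k_+\leq n-k_-$), summing only these terms in Mirsky's inequality and discarding the rest already yields~\eqref{eq:norm:frob}.
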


\begin{proof} Since the Frobenius norm is unitarily invariant, we immediately have \eqref{eq:norm:frob}.
To complete the proof, we show that for an arbitrary $X\in\mathcal{U}_k$ we have $\|A-X\|_F\geq
\|A-\hat{A}\|_F$.
Let $\Delta$ be the matrix such that $X=A + \Delta \in \mathcal{U}_k$. We
will prove that $\|\Delta\|_F^2 \geq \|A-\hat{A}\|_F^2$.
Consider $\tilde{\Delta} = U^*\Delta V$ and partition
\[
U^* X V = U^*(A+\Delta)V = \begin{bmatrix}
    \Sigma_1 + \tilde{\Delta}_1 & \Sigma_2 + \tilde{\Delta}_2 & \Sigma_3 + \tilde{\Delta}_3
\end{bmatrix},
\]
where $\Sigma_1 \in \mathbb{C}^{n\times k_+}$ contains the singular values of $A$ which
are larger than $1$, $\Sigma_2$ the singular values equal to $1$, and $\Sigma_3 \in \mathbb{C}^{n\times k_-}$ the singular values smaller than $1$. 

Since $U^*(A+\Delta )V \in \mathcal{U}_k$, we have
\[
\sigma_{k+1}(U^*(A+\Delta)V) \leq 1, \quad \sigma_{n-k}(U^*(A+\Delta)V) \geq 1.
\]
\begin{itemize}\item 
  Assume first that $k_+ > k$. By the interlacing inequalities (Theorem~\ref{thm:hj2}),
  \[
  \sigma_{k+1}(\Sigma_1 + \tilde{\Delta}_1) \leq \sigma_{k+1}(U^*(A+\Delta)V) \leq 1,
  \]
  and then by Weyl's inequalities (Theorem~\ref{thm:hj1}) for each $k < i \leq k_+$
  \[
  \sigma_i = \sigma_i(\Sigma_1) \leq \sigma_{k+1}(\Sigma_1 + \tilde{\Delta}_1) +
  \sigma_{i-k}(\tilde{\Delta}_1) \leq 1 + \sigma_{i-k}(\tilde{\Delta}_1),
  \]
  from which we obtain $\sigma_{i-k}(\tilde{\Delta}_1) \geq \sigma_i-1$, and hence
  \begin{equation} \label{ineq11} \norm{\tilde{\Delta}_1}_F^2 \geq \sum_{i=1}^{k_+-k}
    \sigma_{i}(\tilde{\Delta}_1)^2 \geq \sum_{i=k+1}^{k_+} (\sigma_i-1)^2.
  \end{equation}
  Note that~\eqref{ineq11} holds trivially also when $k_+ \leq k$.

  \item Similarly, assume that $k_- > k$ (the case $k_- \leq k$ is trivial), and we use
    interlacing inequalities (Theorem~\ref{thm:hj2}) 
  to get
  \[
  \sigma_{k_- - k}(\Sigma_3 + \tilde{\Delta}_3) \geq \sigma_{n-k}(U^*(A+\Delta)V) \geq 1,
  \]
  and Weyl's inequalities (Theorem~\ref{thm:hj1}) for each $k<i \leq k_-$ to show
  \[
  1 \leq \sigma_{k_- - k}(\Sigma_3 + \tilde{\Delta}_3) \leq \sigma_{k_- +1-i}(\Sigma_3) +
  \sigma_{i-k}(\tilde{\Delta}_3) = \sigma_{n+1-i} + \sigma_{i-k}(\tilde{\Delta}_3),
  \]
  from which we obtain $\sigma_{i-k}(\tilde{\Delta}_3) \geq 1 - \sigma_{n+1-i}$. Hence
  \begin{equation} \label{ineq12} \norm{\tilde{\Delta}_3}_F^2 \geq \sum_{j=n+1-k_-}^{n-k}
    (1-\sigma_j)^2.
  \end{equation}
\end{itemize}
Putting together~\eqref{ineq11} and~\eqref{ineq12}, we have
\[
\norm{\Delta}_F^2 \geq \norm{\tilde{\Delta}_1}_F^2 + \norm{\tilde{\Delta}_3}_F^2 \geq \sum_{i=k+1}^{k_+} (\sigma_i-1)^2 + \sum_{j=n+1-k_-}^{n-k} (1-\sigma_j)^2 = \|A-\hat{A}\|_F^2,
\]
which is precisely what we wanted to prove.
\end{proof}

In this case, unlike in the $2$-norm setting, this minimizer is unique when all
 singular values are different.\footnote{In fact the constraint of all singular values differing can be relaxed: one can construct examples in which there are several identical singular values and all (or none) of them should be changed to 1.}

\section{Detecting Hermitian-plus-rank-$k$ matrices}
\label{sec:herm}

We call $\mathcal{H}_k$ the set of Hermitian-plus-rank-$k$ matrices, i.e., $A\in\mathcal{H}_k$ if and only
if there exist a Hermitian matrix $H$ and two matrices $G,B\in\mathbb{C}^{n\times k}$ such that $A = H +GB^*$.

In this and the next section we will answer similar questions: how can we tell if $A\in \mathcal{H}_k$? How do we find the distance from a matrix $A$ to the closed set $\mathcal{H}_k$?
To answer these questions, we first need an Hermitian equivalent of Lemma~\ref{lem:2x2}. 

\begin{lemma} \label{lem2x2herm}
For any pair of real numbers $\sigma_1$ and $\sigma_2$,  such that $\sigma_1\geq 0, \sigma_2 \leq 0$,  there are two real vectors $c$ and $b$ such that
\begin{equation} \label{eq:hermU1}
\begin{bmatrix}
    \sigma_1\\
    & \sigma_2
\end{bmatrix} = bc^T+cb^T.
\end{equation}
\end{lemma}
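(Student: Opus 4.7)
The plan is constructive: I will produce explicit real vectors $b,c \in \mathbb{R}^2$ that satisfy \eqref{eq:hermU1} and verify the identity by direct computation. Writing $b=(b_1,b_2)^T$ and $c=(c_1,c_2)^T$, equating entries of $bc^T+cb^T$ with the diagonal matrix on the left gives three equations in four unknowns:
\[
2b_1c_1=\sigma_1, \qquad 2b_2c_2=\sigma_2, \qquad b_1c_2+b_2c_1=0.
\]
So I expect a one-parameter family of solutions, and I just need to produce any one.

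The key ansatz is to set $c_1=b_1$ and $c_2=-b_2$, which automatically enforces the off-diagonal condition since $b_1c_2+b_2c_1=-b_1b_2+b_2b_1=0$. The remaining diagonal equations reduce to $2b_1^2=\sigma_1$ and $-2b_2^2=\sigma_2$, which are solved by
\[
b_1=\sqrt{\sigma_1/2}, \qquad b_2=\sqrt{-\sigma_2/2}.
\]
These square roots are real precisely under the sign hypotheses $\sigma_1\geq 0$ and $\sigma_2\leq 0$ in the statement; boundary cases where one of $\sigma_1,\sigma_2$ is zero pose no problem, as the corresponding entry of $b$ simply vanishes. A direct substitution then confirms \eqref{eq:hermU1}.

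There is no real obstacle here. If anything, the only subtle point worth a short remark is \emph{why} the sign hypothesis is needed in the first place: a quick calculation gives $\det(bc^T+cb^T)=-(b_1c_2-b_2c_1)^2\leq 0$ for any real $b,c$, so the diagonal entries of any matrix of the form $bc^T+cb^T$ must have opposite signs (or include a zero). Thus the hypothesis of the lemma is not only sufficient but also necessary, making the construction above the tightest possible result.
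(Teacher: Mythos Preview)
Your proof is correct and follows essentially the same approach as the paper: an explicit construction of $b$ and $c$ verified by direct computation, differing only by a harmless rescaling (the paper takes $b=\tfrac12(\sqrt{\sigma_1},-\sqrt{-\sigma_2})^T$, $c=(\sqrt{\sigma_1},\sqrt{-\sigma_2})^T$). Your additional remark that $\det(bc^T+cb^T)\le 0$, showing the sign hypothesis is also necessary, is a nice observation not present in the original.
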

\begin{proof}
	Define the vectors $c$ and $b$ as follows:
	\[
	  b := \frac{1}{2} \begin{bmatrix}
	    \sqrt{\sigma_1} \\ - \sqrt{-\sigma_2} \\
	  \end{bmatrix}, \qquad 
	  c := \begin{bmatrix}
	     \sqrt{\sigma_1} \\
	     \sqrt{-\sigma_2} \\
	  \end{bmatrix}. 
	\]
	The result follows by a direct computation. 
\end{proof}

The next Theorem is the analogue of Theorem~\ref{thm:mainortho}, 
where we look at eigenvalues of the skew-Hermitian part of a matrix instead of at 
the singular values. We rely on the following lemma.

\begin{lemma} \label{lem:eigssign} 
	Let $B, C$ be any $n \times k$ full rank matrices, and let 
	$S =BC^* +CB^*$. Then, $S$ has at most $k$ positive and at most $k$ negative
	eigenvalues. 
\end{lemma}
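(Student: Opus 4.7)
The plan is to exhibit $S$ as a difference of two positive semidefinite matrices, each of rank at most $k$, and then read off the required bounds on positive and negative eigenvalues directly from Weyl's inequality (Theorem~\ref{thm:hj1}).

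First, I would verify the polarization-type identity
\[
S \;=\; BC^* + CB^* \;=\; \tfrac{1}{2}(B+C)(B+C)^* \;-\; \tfrac{1}{2}(B-C)(B-C)^*
\]
by direct expansion. Set $P := \tfrac{1}{2}(B+C)(B+C)^*$ and $N := \tfrac{1}{2}(B-C)(B-C)^*$. Both are Hermitian positive semidefinite, and each has rank at most $k$ because $B \pm C$ is an $n \times k$ matrix. The trivial cases $k \geq n$ give nothing to prove, so I assume $k < n$; then $\lambda_{k+1}(P) = \lambda_{k+1}(N) = 0$, and consequently $\lambda_1(-P) = -\lambda_n(P) = 0$ and $\lambda_1(-N) = 0$.

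Next, I would apply Weyl's inequality to $S = P + (-N)$ with indices $i = k+1$ and $j = 1$, obtaining
\[
\lambda_{k+1}(S) \;\leq\; \lambda_{k+1}(P) + \lambda_1(-N) \;=\; 0 + 0 \;=\; 0,
\]
so at most $k$ eigenvalues of $S$ are strictly positive. The symmetric application to $-S = N + (-P)$ gives $\lambda_{k+1}(-S) \leq \lambda_{k+1}(N) + \lambda_1(-P) = 0$, and since $\lambda_{k+1}(-S) = -\lambda_{n-k}(S)$, this yields $\lambda_{n-k}(S) \geq 0$, i.e., at most $k$ eigenvalues of $S$ are strictly negative.

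The only creative step is spotting the PSD-difference decomposition; everything else is a mechanical application of Theorem~\ref{thm:hj1}. I would also note in passing that the full-rank hypothesis on $B$ and $C$ is never used, so the statement remains true for arbitrary $n \times k$ matrices.
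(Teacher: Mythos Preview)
Your proof is correct and takes a genuinely different route from the paper's. The paper argues via a congruence: since $C$ has full column rank, one can change basis so that $C=\begin{bmatrix}I_k\\0\end{bmatrix}$, after which $S$ has a trailing $(n-k)\times(n-k)$ zero principal submatrix; the interlacing inequalities (Theorem~\ref{thm:hj2}) applied to that zero block give $\lambda_{k+1}(S)\le 0$ and $\lambda_{n-k}(S)\ge 0$. Your argument instead writes $S$ as a difference of two positive semidefinite matrices of rank at most $k$ via the polarization identity, and then invokes Weyl's inequality (Theorem~\ref{thm:hj1}) directly. The paper's route is short once the reduction is made, but it tacitly uses Sylvester's law of inertia to justify the ``change of basis'' and genuinely needs the full-rank hypothesis on $C$ to perform it. Your approach avoids both of these: it uses only Theorem~\ref{thm:hj1}, and---as you correctly observe---it never touches the full-rank assumption, so it proves the lemma for arbitrary $n\times k$ matrices $B,C$.
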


\begin{proof}
Up to a change of basis, we can assume $C = \begin{bmatrix}
    I_k\\0
\end{bmatrix}$. Then, $S$ has a trailing $(n-k) \times (n-k)$ zero submatrix $T$. 
 Let $\lambda_1(S)\ge  \cdots \ge  \lambda_n(S)$ be the eigenvalues of $S$.
By the interlacing inequalities, $\lambda_{k+1}(S) \leq \lambda_1(T) = 0$, and $\lambda_{n-k}(S) \geq \lambda_{n-k}(T) = 0$.
\end{proof}

\begin{theorem} \label{thm:mainherm}
Let $A\in\mathbb{C}^{n\times n}$, and $0\leq k \leq n$. Then, $A\in\mathcal{H}_k$ if and only if the Hermitian matrix $S(A):=\frac{1}{2i}(A-A^*)$ has at most $k$ positive eigenvalues  and at most $k$ negative eigenvalues.

\end{theorem}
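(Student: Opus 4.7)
My plan is to mirror the argument of Theorem~\ref{thm:mainortho}, with Lemma~\ref{lem:2x2} replaced by its Hermitian analog Lemma~\ref{lem2x2herm} and Lemma~\ref{lem:eigssign} supplying the signature constraint that Weyl's inequalities supplied in the unitary case. The bridge between the structural property $A\in\mathcal{H}_k$ and a spectral condition on $S(A)$ is the identity
\[
S(A) \;=\; \tfrac{1}{2i}(GB^* - BG^*) \;=\; \tilde{G}B^* + B\tilde{G}^*, \qquad \tilde{G} := -\tfrac{i}{2}G,
\]
valid whenever $A = H + GB^*$ with $H$ Hermitian. Conversely, given any factorization $S(A) = CB^* + BC^*$, the choice $G := 2iC$ and $H := A - GB^*$ yields $H - H^* = (A-A^*) - (GB^* - BG^*) = 2iS(A) - 2i(CB^* + BC^*) = 0$, so $H$ is Hermitian and $A = H + GB^* \in \mathcal{H}_k$. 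The whole theorem thus reduces to characterizing those Hermitian matrices that admit a factorization of the form $CB^* + BC^*$ with $n\times k$ factors.

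The forward direction follows at once: if $A\in\mathcal{H}_k$ the identity above puts $S(A)$ in the form $\tilde G B^* + B\tilde G^*$ to which Lemma~\ref{lem:eigssign} applies (after passing, if needed, to full-rank factorizations of $\tilde G$ and $B$, which can only shrink $k$), delivering at most $k$ positive and at most $k$ negative eigenvalues of $S(A)$.

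For the reverse direction, let $k_+,k_-$ be the numbers of positive and negative eigenvalues of $S(A)$ and set $\ell := \max(k_+,k_-) \leq k$, $h := \min(k_+,k_-)$. Diagonalize $S(A) = U\Lambda U^*$ and reorder $\Lambda$, exactly as in the proof of Theorem~\ref{thm:mainortho}, into $h$ blocks of size $2\times 2$ each pairing a positive with a negative eigenvalue, $\ell - h$ nonzero singletons, and a trailing zero block of size $n - k_+ - k_-$. Lemma~\ref{lem2x2herm} factors each $2\times 2$ block as $b_ic_i^T + c_ib_i^T$; each nonzero singleton $\sigma$ admits the trivial decomposition $\sigma = 2\cdot\tfrac{\sigma}{2}\cdot 1$; the zero block contributes no columns. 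Assembling the vectors columnwise into $n\times \ell$ matrices $C,B$ and undoing the reordering and the similarity by $U$ produces a factorization $S(A) = CB^* + BC^*$, and the construction of $G$ and $H$ in the previous paragraph then yields $A = H + GB^* \in \mathcal{H}_\ell \subseteq \mathcal{H}_k$.

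The only genuine technicality is bookkeeping: correctly tracking the factor of $i$ that translates between the Hermitian factorization $S(A) = CB^* + BC^*$ and the complex perturbation $GB^*$ of $A$. Once the correspondence $G = 2iC$ is pinned down, the blockwise construction follows the unitary template of Theorem~\ref{thm:mainortho} almost verbatim, the $2\times 2$ Hermitian building block of Lemma~\ref{lem2x2herm} replacing the $2\times 2$ unitary one of Lemma~\ref{lem:2x2}.
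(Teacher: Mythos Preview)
Your proof is correct and follows essentially the same route as the paper's: both directions hinge on the identity $S(A)=CB^*+BC^*$ (with $C=G/(2i)$, equivalently $G=2iC$), the forward direction invoking Lemma~\ref{lem:eigssign}, and the reverse direction diagonalizing $S(A)$, pairing positive with negative eigenvalues into $2\times 2$ blocks handled by Lemma~\ref{lem2x2herm}, and treating the leftover nonzero singletons trivially. Your remark about passing to full-rank factors before applying Lemma~\ref{lem:eigssign} is a welcome bit of extra care that the paper glosses over.
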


\begin{proof}
  Let $\lambda_1, \dots, \lambda_n$ be the eigenvalues of $\frac{1}{2i}(A-A^*)$, sorted by
  decreasing order, i.e., $\lambda_1 \geq \lambda_2 \geq \dots \geq \lambda_n$. Then the
  stated condition is equivalent to demanding $\lambda_{k+1}\leq 0$, $\lambda_{n-k}\geq 0$.

We first show that if $A\in\mathcal{H}_k$ these inequalities hold. If $A\in\mathcal{H}_k$,
then there exists a Hermitian matrix $H$ and two matrices $G, B\in \mathbb{C}^{n\times k}$
such that $A=H+GB^*$. Then
\[  
\frac{1}{2i}(A-A^*)=\frac{1}{2i}(GB^*-BG^*)=CB^*+BC^*,
  \]
with $C=G/(2i)$. The result follows from Lemma~\ref{lem:eigssign}.

We now prove the converse, that is, each matrix  satisfying the inequalities belongs to
$\mathcal{H}_k$.
We first
prove that each Hermitian matrix $S$ that has $k_+$ strictly positive eigenvalues 
 and  $k_-$ strictly
negative eigenvalues, where $\ell=\max\{k_-,k_+\}\leq k$  can be written as $CB^* + BC^*$ with $B,C \in \mathbb C^{n \times \ell}$.

 Let us assume that the eigenvalues of $S$ are $\lambda_j$ with
\[
  \lambda_1, \ldots, \lambda_{k_+} > 0, \qquad 
  \lambda_{k_+ +1}, \ldots, \lambda_{k_+ + k_-} < 0, \qquad 
  \lambda_{k_+ + k_- + 1}, \ldots, \lambda_n = 0. 
\]

Since $S$ is 
normal, we can diagonalize it by an orthogonal transformation and obtain 
\[
  U^*SU = \mathrm{diag}(
    \Lambda_1,
    \ldots,
    \Lambda_{h},
   \Lambda_{h+1},
    \ldots,
    \Lambda_{\ell},
    0_m
  ),
\]
where we get, as in the proof of the unitary case, three types of blocks:
\begin{itemize}
\item Diagonal blocks $\Lambda_1, \Lambda_2, \ldots, \Lambda_{h}$ of size $2\times 2$
  containing one eigenvalue larger and one eigenvalue smaller than $0$.
\item $1\times 1$ matrices $\Lambda_{h+1}, \ldots, \Lambda_{\ell}$ containing the
  remaining eigenvalues differing from $0$. Since either all positive or negative
  eigenvalues are used already to form the blocks $\Lambda_1$ up to $\Lambda_h$, we end up with
  scalar blocks that all have the same sign.
\item A final zero matrix of size $m=n-k_{-}-k_+=n-2h-\ell$.
\end{itemize}

Lemma~\ref{lem2x2herm} tells us  that 
each of the $2 \times 2$ blocks $\Lambda_j$, for $j=1,\ldots,h$ can be written as $b_j c_j^* + c_jb_j^*$, for
appropriate choices of $b_j, c _j$,
 since the eigenvalues on the diagonal
are real and have opposite sign. Moreover, the remaining 
diagonal entries $\Lambda_j$ for $j=h+1,\ldots,\ell$ can be written choosing 
$b_j =  \lambda_j$ and $c_j = 1/2$. Therefore, we conclude that $Q^*SQ$ 
is of the form $\tilde C \tilde B^T + \tilde B \tilde C^T$ for 
some $\tilde C, \tilde B$ with $\ell$ columns. Setting $G := (-2i)Q \tilde C$ and 
$B := Q \tilde B$ proves our claim. 
It is immediate to verify that the matrix  $A - GB^*$ is 
Hermitian, since $(A -\, GB^*)^* - (A -\, GB^*) = 0$.
\end{proof}

Theorem~\ref{thm:mainherm} has alternative formulations as well. We can for instance look
at $A-A^*$ and count the number of eigenvalues with positive and negative
imaginary parts, since when $A$ is Hermitian, $iA$ will be skew-Hermitian.

We will not go into the details, but it is obvious that Theorem~\ref{thm:mainherm} 
admits an equivalent formulation to check whether a matrix is 
a rank $k$ perturbation of a skew-Hermitian matrix. To this end one 
 considers $A+A^*$ and counts the number of positive and negative eigenvalues.

\section{Distance from Hermitian plus rank $k$}

\label{sec:dherm}

In this section we will construct, given an arbitrary matrix $A$, the closest
Hermitian-plus-rank-$k$ matrix in both the $2$- and the Frobenius norm.
The following lemma comes in handy.

\begin{lemma}\label{lem:norm}
	Let $\norm{\cdot}$ be any unitarily invariant norm, and 
	$X\in \mathbb{C}^{n\times n}$, and $S(X)=\frac{X-X^*}{2i}$. Then, 
	$\|S(X)\|\le \|X\|$.
\end{lemma}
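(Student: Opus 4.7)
The plan is to derive the inequality from two standard facts about unitarily invariant norms, applied to the triangle inequality on the definition of $S(X)$.

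First I would recall that any unitarily invariant norm $\|\cdot\|$ satisfies $\|\alpha M\| = |\alpha|\cdot\|M\|$ for scalars $\alpha\in\mathbb{C}$ (it is absolutely homogeneous as a norm), and in particular multiplication by $\frac{1}{2i}$ contributes a factor of $\frac{1}{2}$ in norm. Second, I would recall that $\|M^*\| = \|M\|$ for every unitarily invariant norm, since $M$ and $M^*$ share the same singular values, and unitarily invariant norms depend only on the singular value sequence (Ky Fan / von Neumann characterization via symmetric gauge functions).

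With these ingredients in place, the proof is a one-line computation: by the triangle inequality,
\[
\|S(X)\| = \left\| \frac{X - X^*}{2i} \right\| = \frac{1}{2}\,\|X - X^*\| \leq \frac{1}{2}\bigl(\|X\| + \|X^*\|\bigr) = \|X\|,
\]
using $\|X^*\|=\|X\|$ in the last step. No deeper structural argument is required.

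The main ``obstacle'' is really just a matter of exposition rather than substance: one needs to make sure the reader accepts the identity $\|X^*\|=\|X\|$ for every unitarily invariant norm, and that the scalar factor $\tfrac{1}{2i}$ can be pulled out of the norm. Both facts are completely standard and can be cited from any reference on unitarily invariant norms (e.g.\ Horn–Johnson), so I would simply state them and conclude. There is no case distinction between the spectral norm and the Frobenius norm; the argument is uniform across all unitarily invariant norms.
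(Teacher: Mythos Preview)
Your proof is correct and matches the paper's own argument essentially line for line: the paper also establishes $\|X^*\|=\|X\|$ (via the SVD rather than the symmetric gauge function characterization, but this is the same fact) and then applies the triangle inequality to $\frac{X-X^*}{2i}$.
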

\begin{proof}
	Let $X = U\Sigma V^*$ be the SVD of $X$. 
	Since $\norm{\cdot}$ is unitarily invariant, we have that 
	$\norm{X^*} = \norm{V\Sigma^*U^*} = \norm{\Sigma} = \norm{U\Sigma V^*} = \norm{X}$. Then, 
	\[
	\left\|\frac{X-X^*}{2i}\right\|\le \frac{\|X\|+\|X^*\|}{2}=\|X\|. \qedhere
	\]	
\end{proof}

We formulate again two Theorems, one for the closest approximation in the $2$-norm and
another one for the closest approximation in the Frobenius norm. The approximants that 
we construct are
identical, but the proofs differ significantly. Again like in the unitary case we will
change particular eigenvalues of the skew-Hermitian part to find the best approximant.

\begin{theorem}
	Let  $A\in \mathbb{C}^{n\times n}$ and  $S(A)=\frac{1}{2i}(A-A^*)$, having
        eigendecomposition $S(A)=UDU^*$. The eigenvalues are ordered: $\lambda_1\ge \cdots\ge \lambda_{k_+}> 0$, 	$\lambda_{k_++1}= \cdots= \lambda_{n-k_-}=0$  and 
$0> \lambda_{n-k_-+1} \ge \cdots \ge \lambda_{n}$, where $k_+$ stands for the number of
eigenvalues strictly greater than $0$ and $k_-$ for the eigenvalues strictly smaller than $0$.
	Then we have that
	\[
\min_{X\in \mathcal{H}_k}\|A-X\|_2=\|A-\hat A\|_2,
	\]
	where $	\hat A = A- i\, U (D-\hat D) U^*$ and $\hat D$ is a diagonal matrix with diagonal elements 
	$$
	\hat d_i=\begin{cases}
	0 & \mbox{ if } k< i\le k_+ \mbox{ or }   n-k_-< i\le n-k \\
	\lambda_i & \text{otherwise.}
	\end{cases}
	$$
		
\end{theorem}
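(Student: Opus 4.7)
The plan is to adapt the strategy of Theorem~\ref{thm:distanceUk} to the Hermitian-plus-low-rank setting, replacing the singular values of $A$ by the eigenvalues of $S(A)$ and leveraging Lemma~\ref{lem:norm} to pass from $\|A-X\|_2$ to $\|S(A)-S(X)\|_2$.

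First, I would show that the proposed $\hat A$ lies in $\mathcal H_k$. A direct computation gives $S(\hat A) = S(A) - S\bigl(iU(D-\hat D)U^*\bigr)$; since $D-\hat D$ is real diagonal, $iU(D-\hat D)U^*$ is skew-Hermitian, and its image under $S$ is simply $U(D-\hat D)U^*$. Therefore $S(\hat A)=U\hat D U^*$. By construction, $\hat D$ retains only the $k$ largest positive and $k$ smallest negative eigenvalues of $D$ and zeros out the rest, so it has at most $k$ positive and at most $k$ negative eigenvalues. Theorem~\ref{thm:mainherm} then gives $\hat A \in \mathcal H_k$. In addition, since the $2$-norm is unitarily invariant and $A-\hat A = iU(D-\hat D)U^*$,
\[
  \|A-\hat A\|_2 = \|D-\hat D\|_2 = \max\{0,\lambda_{k+1},-\lambda_{n-k}\}.
\]

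Next, I would prove optimality. Take an arbitrary $X \in \mathcal H_k$. By Lemma~\ref{lem:norm} applied to $X-A$, we have $\|A-X\|_2 \geq \|S(A-X)\|_2 = \|S(A) - S(X)\|_2$, so it suffices to show $\|S(A)-S(X)\|_2 \geq \max\{0,\lambda_{k+1},-\lambda_{n-k}\}$. Setting $H := S(A)$ and $M := S(X)$, Theorem~\ref{thm:mainherm} guarantees that $M$ has at most $k$ positive and at most $k$ negative eigenvalues, i.e.\ $\lambda_{k+1}(M)\leq 0$ and $\lambda_{n-k}(M)\geq 0$.

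Writing $H = M + (H-M)$ and applying Weyl's inequality (Theorem~\ref{thm:hj1}) with indices $i=k+1$, $j=1$ yields
\[
  \lambda_{k+1}(H) \leq \lambda_{k+1}(M) + \lambda_1(H-M) \leq \lambda_1(H-M),
\]
hence $\|H-M\|_2 \geq \lambda_1(H-M) \geq \lambda_{k+1}$. Applying the same argument to $-H = -M + (M-H)$, using $\lambda_{k+1}(-H) = -\lambda_{n-k}(H)$ and $\lambda_{k+1}(-M)\leq 0$, produces $\|H-M\|_2 \geq -\lambda_{n-k}$. Together with the trivial bound $\|H-M\|_2 \geq 0$, this gives
\[
  \|A-X\|_2 \geq \|S(A)-S(X)\|_2 \geq \max\{0,\lambda_{k+1},-\lambda_{n-k}\} = \|A-\hat A\|_2,
\]
completing the proof.

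The main obstacle is the sign/indexing bookkeeping in the Weyl inequality: one must correctly translate the conditions on the positive and negative eigenvalues of $S(X)$ into the two separate bounds by applying Weyl to $H$ and to $-H$. Everything else — verifying $\hat A \in \mathcal H_k$ and passing from $A-X$ to its skew-Hermitian part via Lemma~\ref{lem:norm} — is routine.
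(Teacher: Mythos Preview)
Your proposal is correct and follows essentially the same strategy as the paper: verify $\hat A\in\mathcal H_k$ by computing $S(\hat A)=U\hat D U^*$ and invoking Theorem~\ref{thm:mainherm}, then establish optimality by passing from $\|A-X\|_2$ to $\|S(A)-S(X)\|_2$ via Lemma~\ref{lem:norm} and applying Weyl's inequalities together with the sign constraints $\lambda_{k+1}(S(X))\leq 0$, $\lambda_{n-k}(S(X))\geq 0$. Your handling of the second bound by applying Weyl to $-H$ rather than rearranging directly is a cosmetic variation, not a different approach.
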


\begin{proof}
 Theorem~\ref{thm:mainherm} implies that $\hat A=A-i U(D-\hat D) U^*$ belongs to $\mathcal{H}_k$ , since \[
S(\hat A)=\frac{\hat A-\hat A^*}{2i}=\frac{ A- A^*}{2i}- U(D-\hat D)U^*=U\hat D U^*,
\]  
has at most $k$ positive eigenvalues and at most $k$ negative eigenvalues. 

To prove that $\hat A$ is the minimizer of $\|A-X\|_2$ we have to prove that for every
$X\in \mathcal{H}_k$ we have that $\|A-X\|_2\ge \|A-\hat A\|_2= \|D-\hat
D\|_2=\max\{\lambda_{k+1}(S(A)), -\lambda_{n-k}(S(A)), 0\}$.

Assume that $X=A+\Delta$, then $S(X)=S(A)+S(\Delta)$. Using Weyl's inequality
(Theorem~\ref{thm:hj1}), we have
\[
\lambda_{k+1}(S(X))=\lambda_{k+1}(S(A)+S(\Delta))\ge \lambda_{k+1}(S(A))-\lambda_1(S(\Delta)),
\]
and therefore
\[
\|A-X\|_2=\|\Delta\|_2\ge \|S(\Delta)\|_2\ge \lambda_1(S(\Delta))\ge \lambda_{k+1}(S(A))-\lambda_{k+1}(S(X))\ge \lambda_{k+1}(S(A))
\]
since $X\in \mathcal{H}_k$ implies $\lambda_{k+1}(S(X))\le 0.$
Similarly from
\[
\lambda_{n-k}(S(A+\Delta))\le \lambda_1(S(\Delta))+\lambda_{n-k}(S(A)),
\]
we get 
\[
\|A-X\|_2\ge \lambda_1(S(\Delta)) \ge \lambda_{n-k}(S(X))-\lambda_{n-k}(S(A))\ge -\lambda_{n-k}(S(A)),
\]
since $X\in \mathcal{H}_k$ implies $\lambda_{n-k}(S(X))\ge 0.$ 
Combining the two inequalities and using the  non-negativeness of the norm,  we have
\[
\|A-X\|_2\ge \max\{\lambda_{k+1}(S(A)), -\lambda_{n-k}(S(A)),0\}.
\]

\end{proof}

We remark that, comparable to the unitary case, the minimizer in the $2$-norm is not
unique. We have constructed a solution $\hat{A}$ such that 
$\|A-\hat A\|_2= \|D-\hat
D\|_2=\max\{\lambda_{k+1}(S(A)), -\lambda_{n-k}(S(A)), 0\}$. It is, however, easy to find a
concrete example and  a
matrix $\tilde{A}$ different from $\hat{A}$ such that $\|A-\hat A\|_2= \|D-\hat D\|_2 = 
 \|D-\tilde D\|_2 = \|A-\tilde A\|_2$.


\begin{theorem}
	Let  $A\in \mathbb{C}^{n\times n}$ and  $S(A)=\frac{1}{2i}(A-A^*)$, having
        eigendecomposition $S(A)=UDU^*$. The eigenvalues are ordered: $\lambda_1\ge \cdots\ge \lambda_{k_+}> 0$, 	$\lambda_{k_++1}= \cdots= \lambda_{n-k_-}=0$  and 
$0> \lambda_{n-k_-+1} \ge \cdots \ge \lambda_{n}$, where $k_+$ stands for the number of
eigenvalues strictly greater than $0$ and $k_-$ for the eigenvalues strictly smaller than $0$.
	Then we have that
	\[
	\min_{X\in \mathcal{H}_k}\|A-X\|_F=\|A-\hat A\|_F
	\]
where	$\hat A = A- i\, U (D-\hat D) U^*,$ and
$\hat D$ is a diagonal matrix with diagonal elements 
	$$
	\hat d_i=\begin{cases}
	0 & \mbox{ if } k< i\le k_+ \mbox{ or }   n-k_-< i\le n-k \\
	\lambda_i & \text{otherwise.}
	\end{cases}
	$$
	
\end{theorem}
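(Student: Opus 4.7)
The plan is to mirror, in the Hermitian setting, the strategy used in the Frobenius proof for the unitary case, with one extra preliminary step supplied by Lemma~\ref{lem:norm}. First I would verify that $\hat A \in \mathcal{H}_k$: since $S(\hat A) = U\hat D U^*$ and, by construction, $\hat D$ retains at most $k$ positive and at most $k$ negative diagonal entries, Theorem~\ref{thm:mainherm} applies. The value of $\|A-\hat A\|_F$ is immediate from unitary invariance of the Frobenius norm: $\|A-\hat A\|_F = \|U(D-\hat D)U^*\|_F = \|D-\hat D\|_F$, which equals $\bigl(\sum_{i=k+1}^{k_+}\lambda_i^2 + \sum_{i=n-k_-+1}^{n-k}\lambda_i^2\bigr)^{1/2}$.

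For the lower bound, take an arbitrary $X \in \mathcal{H}_k$ and write $X = A + \Delta$. The key reduction, which is the new ingredient absent from the unitary case, is to pass from $\Delta$ to its skew-Hermitian part: since the Frobenius norm is unitarily invariant, Lemma~\ref{lem:norm} gives $\|S(\Delta)\|_F \le \|\Delta\|_F$. It therefore suffices to prove $\|S(\Delta)\|_F^2 \ge \|D - \hat D\|_F^2$. Setting $\tilde E := U^* S(\Delta) U$, the matrix $U^* S(X) U = D + \tilde E$ is Hermitian with at most $k$ positive and $k$ negative eigenvalues.

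Now I would partition $\tilde E$ conformally with the block structure $D = \diag(D_+, 0_m, D_-)$, so that $\tilde E$ has diagonal blocks $\tilde E_{++}$ (of size $k_+ \times k_+$, Hermitian) and $\tilde E_{--}$ (of size $k_- \times k_-$, Hermitian). The interlacing inequalities of Theorem~\ref{thm:hj2} applied to the leading principal block give $\lambda_{k+1}(D_+ + \tilde E_{++}) \le \lambda_{k+1}(D + \tilde E) \le 0$ (when $k < k_+$), and the Hermitian Weyl inequality from Theorem~\ref{thm:hj1} yields, for each $k < i \le k_+$,
\[
\lambda_i(D_+) \le \lambda_{k+1}(D_+ + \tilde E_{++}) + \lambda_{i-k}(-\tilde E_{++}) \le \lambda_{i-k}(-\tilde E_{++}).
\]
Squaring and summing gives $\|\tilde E_{++}\|_F^2 \ge \sum_{j=1}^{k_+-k}\lambda_j(-\tilde E_{++})^2 \ge \sum_{i=k+1}^{k_+}\lambda_i^2$, exactly as in the unitary Frobenius proof. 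An entirely symmetric argument applied to $\tilde E_{--}$, using $\lambda_{n-k}(D+\tilde E) \ge 0$ and interlacing on the trailing principal block, gives $\|\tilde E_{--}\|_F^2 \ge \sum_{i=n-k_-+1}^{n-k}\lambda_i^2$.

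Combining, $\|S(\Delta)\|_F^2 = \|\tilde E\|_F^2 \ge \|\tilde E_{++}\|_F^2 + \|\tilde E_{--}\|_F^2 \ge \|D-\hat D\|_F^2$, and chaining with $\|A-X\|_F = \|\Delta\|_F \ge \|S(\Delta)\|_F$ closes the argument. The main obstacle I expect is purely bookkeeping, namely correctly handling the eigenvalue indexing when translating eigenvalues of $-\tilde E_{++}$ back into a lower bound for the Frobenius norm of $\tilde E_{++}$, and verifying that the two boundary cases $k_+ \le k$ and $k_- \le k$ give trivial (vacuous) contributions rather than breaking the interlacing step. Everything else is a faithful Hermitian transcription of the previously established Frobenius argument.
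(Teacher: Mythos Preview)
Your proposal is correct and follows essentially the same route as the paper: verify $\hat A\in\mathcal H_k$, invoke Lemma~\ref{lem:norm} to pass from $\Delta$ to $S(\Delta)$, conjugate by $U$, partition into the $k_+\times k_+$ and $k_-\times k_-$ diagonal blocks, and combine interlacing (Theorem~\ref{thm:hj2}) with Weyl (Theorem~\ref{thm:hj1}) on each block. Your use of $-\tilde E_{++}$ in the Weyl step is in fact the careful way to write it, and the bookkeeping concerns you flag (the vacuous cases $k_+\le k$, $k_-\le k$) are handled in the paper exactly as you anticipate.
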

\begin{proof}
  We know that $\hat A\in \mathcal{H}_k$.  To prove that this is the
  minimizer, we have to show that for any $X\in\mathcal{H}_k$ we have that
  $\|A-X\|_F\geq\|A-\hat{A}\|_F$.

Consider $\Delta\in\mathbb{C}^{n\times n}$ such that $X=A + \Delta
  \in \mathcal{H}_k$. We know from Theorem~\ref{thm:mainherm} that this implies that
  $\lambda_{k+1}(S(A+\Delta ))\le 0$ and that $\lambda_{n-k}(S(A+\Delta))\ge 0$.  

Consider $\tilde{\Delta}=U^*S(\Delta)U$ and partition it as follows
	 \[
	 U^*S(A+\Delta)U=\begin{bmatrix}
	 D_1 + \tilde\Delta_{11} & \tilde\Delta_{12} & \tilde\Delta_{13}\\
	 \tilde\Delta_{21}&D_2 +\tilde\Delta_{22} & \tilde\Delta_{23}\\
	\tilde\Delta_{31}& \tilde\Delta_{32}&D_3 + \tilde\Delta_{33}
	 \end{bmatrix},
	 \]
	where $D=\diag(D_1, D_2, D_3)$ is the diagonal of the eigendecomposition
        $S(A)=UDU^*$. 
	In particular 
	$D_1 \in \mathbb{R}^{k_+ \times k_+}$ contains the eigenvalues of $S(A)$ which are
        strictly greater than $0$, and $D_3\in\mathbb{R}^{k_- \times k_-}$ contains the
        eigenvalues of $S(A)$ strictly smaller than $0$.
	
        \begin{itemize}
        \item Assume that $k_+>k$.
          The matrix $U^*S(A+\Delta)U$ is Hermitian, hence by the interlacing inequalities
          we get
          \[
          \lambda_{k+1}(D_1+\tilde\Delta_{11}) \leq \lambda_{k+1}(S(A+\Delta)) \leq 0;
          \]
          and then by Weyl's inequalities for each $k < i \leq k_+$
          \[
          \lambda_i = \lambda_i(D_1) \leq \lambda_{k+1}(D_1+ \tilde\Delta_{11}) +
          \lambda_{i-k}(\tilde\Delta_{11}) \leq \lambda_{i-k}(\tilde\Delta_{11}),
          \]
          from which we obtain $\lambda_{i-k}(\tilde\Delta_{11}) \geq \lambda_i$, and
          hence
          \begin{equation} \label{ineq1} \norm{\tilde\Delta_{11}}_F^2=\sum_{i=1}^{k_+}
            \lambda_i(\tilde\Delta_{11})^2 \geq \sum_{i=k+1}^{k_+} \lambda_i^2.
          \end{equation}
          Note that~\eqref{ineq1} holds trivially also when $k_+ \leq k$.
	
         \item Similarly, assume that $k_- > k$, and use again the interlacing inequalities to get
          \[
          \lambda_{k_- - k}(D_3 + \tilde\Delta_{33}) \geq \lambda_{n-k}(S(A+\Delta)) \geq
          0.
          \]
          Using Weyl's inequalities for each $k<i \leq k_-$ we can show that
          \[
          0 \leq \lambda_{k_- - k}(D_3 + \tilde \Delta_{33}) \leq \lambda_{k_- +1-i}(D_3)
          + \lambda_{i-k}(\tilde\Delta_{33}) = \lambda_{n+1-i} +
          \lambda_{i-k}(\tilde\Delta_{33}),
          \]
          from which we obtain $\lambda_{i-k}(\tilde\Delta_{33}) \geq - \lambda_{n+1-i}\ge
          0$, and hence
          \begin{equation} \label{ineq2} \norm{\tilde\Delta_{33}}_F^2
            \geq
            \sum_{i=k+1}^{k_-} \lambda_{i-k}(\tilde\Delta_{33})^2
            \geq \sum_{i=k+1}^{k_-} (-\lambda_{n+1-i})^2= \sum_{j=n+1-k_-}^{n-k} \lambda_{j}^2.
          \end{equation}
          We note that this equation holds trivially when $k_-\leq k$.
        \end{itemize}
	Combining the inequalities~\eqref{ineq1} and~\eqref{ineq2}, and by Lemma~\ref{lem:norm}
	stating that $
	\norm{\Delta}_F^2 \geq \norm{S(\Delta)}_F^2
	$, 
	we get
	\[
	\norm{A-X}_F^2=\norm{\Delta}_F^2 \geq \norm{\tilde\Delta_{11}}_F^2 + \norm{\tilde \Delta_{33}}_F^2 \geq \sum_{i=k+1}^{k_+} \lambda_i^2 + \sum_{j=n+1-k_-}^{n-k} \lambda_j^2,
	\]
	which is precisely $\norm{D - \hat D}_F = \norm{A -\hat A}_F$. This concludes the proof.
\end{proof}

\section{The Cayley transform}
\label{sec:cayley}

Unitary and Hermitian structures are both special cases of normal matrices. 
Even more interestingly, it is known that they can be mapped one into
the other through the use of the Cayley transform,
defined as follows:
\[
  \mathcal C(z) := \frac{z - i}{z + i}, \qquad 
  z \in \mathbb{C} \setminus \{ -i \}. 
\]
The Cayley transform is a particular case of a M\"obius transform, which permutes
projective lines of the Riemann sphere. In particular, we have that 
$\mathcal C(\mathbb R) = \mathbb S^1$. The inverse transform can be readily
expressed as 
\[
  \mathcal C^{-1}(z) = i \cdot \frac{1 + z}{1 - z}, \qquad 
  z \in \mathbb{C}\setminus \{ -1 \}. 
\]
The fact that $\mathcal C(z)$ maps Hermitian matrices into unitary ones 
has been known for a long time \cite{Ne30}. More recently, the observation
that one can switch between low-rank perturbations of these structures has 
been exploited for develop fast algorithms for unitary-plus-low-rank and 
Hermitian-plus-low-rank matrices \cite{p542,AuMaVaWa17a}. 

\begin{lemma} \label{lem:cayley}
	Let $A$ be an $n \times n$ matrix. Then we have the following.
	\begin{itemize}
		\item If $A$ does not have the eigenvalue $-i$ and $A$ is a rank 
		$k$ perturbation of a Hermitian matrix, then $\mathcal C(A)$ will be a rank $k$
		perturbation of a unitary matrix. Moreover, $\mathcal C(A)$ does not 
		possess the eigenvalue $1$. 
		\item If $A$ does not have the eigenvalue $1$ and is a rank $k$ 
		perturbation of a unitary matrix, then $\mathcal C^{-1}(A)$ will be 
		a rank $k$ perturbation of an Hermitian matrix, and 
		$C^{-1}(A)$ does not possess eigenvalue $-i$. 
	\end{itemize}
\end{lemma}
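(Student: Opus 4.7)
The plan is to bypass the obvious move — manipulating the Cayley transform of the unitary (or Hermitian) summand directly in $A = Q + GB^*$ — which runs into trouble in the second bullet, since a chosen unitary $Q$ in the decomposition might itself have $1$ as an eigenvalue and therefore lie outside the domain of $\mathcal C^{-1}$. Instead, I would invoke the spectral characterizations already proved, Theorems~\ref{thm:mainortho} and~\ref{thm:mainherm}, and use Sylvester's law of inertia as the bridge. Concretely, the idea is to express the ``defect'' of the Cayley image as an invertible $*$-congruence of the defect of the input.

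For the first bullet, with $B := \mathcal{C}(A) = (A - iI)(A + iI)^{-1}$ (defined since $-i \notin \sigma(A)$), I would establish the identity
\[
  B^* B - I = -4\,(A^* - iI)^{-1}\, S(A)\, (A + iI)^{-1}.
\]
This should follow by expanding $(A^* + iI)(A - iI) = A^*A + I - 2S(A)$ and $(A^* - iI)(A + iI) = A^*A + I + 2S(A)$ via $A - A^* = 2i\,S(A)$, and subtracting. The right-hand side is an invertible $*$-congruence of $-S(A)$, so $B^*B - I$ has exactly as many positive (resp.\ negative) eigenvalues as $S(A)$ has negative (resp.\ positive) ones. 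Theorem~\ref{thm:mainherm} caps each count by $k$, and these counts equal the number of singular values of $B$ above and below $1$; Theorem~\ref{thm:mainortho} then places $B$ in $\mathcal{U}_k$.

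For the second bullet, the analogous calculation for $B := \mathcal{C}^{-1}(A) = -i\,(A + I)(A - I)^{-1}$ (defined since $1 \notin \sigma(A)$) should yield
\[
  (A^* - I)\, S(B)\, (A - I) = I - A^* A,
\]
by expanding $(A^* - I)(A + I) + (A^* + I)(A - I) = 2(A^*A - I)$. Thus $S(B)$ is $*$-congruent to $I - A^*A$, whose positive and negative eigenvalues count the singular values of $A$ below and above $1$; $A \in \mathcal{U}_k$ caps each by $k$ via Theorem~\ref{thm:mainortho}, and then Theorem~\ref{thm:mainherm} delivers $B \in \mathcal{H}_k$.

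The remaining eigenvalue claims will reduce to the scalar non-solvabilities $\mathcal{C}(z) = 1$ and $\mathcal{C}^{-1}(z) = -i$: an eigenvector $v$ of $\mathcal{C}(A)$ for eigenvalue $1$ would force $w := (A + iI)^{-1} v$ to satisfy $-2i\,w = 0$, hence $v = 0$, and symmetrically on the other side. The main technical work of the proof lies in setting up the two congruence identities; the algebra itself is short, but the sign bookkeeping with $S(A) = (A - A^*)/(2i)$ and the various $\pm iI$ factors is where I would most expect a slip, so I would write out those expansions carefully before contracting the two sides.
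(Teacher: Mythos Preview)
Your proposal is correct, and the two congruence identities check out exactly as you describe (including the sign bookkeeping: $(A^*+iI)(A-iI)=A^*A+I-2S(A)$ and $(A^*-iI)(A+iI)=A^*A+I+2S(A)$, so their difference gives $-4S(A)$, and $(A^*-iI)^{-1}=((A+iI)^{-1})^*$ makes it a genuine $*$-congruence). The eigenvector arguments for the spectral exclusions are also fine.

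However, your route is genuinely different from the paper's. The paper proves Lemma~\ref{lem:cayley} \emph{without} invoking Theorems~\ref{thm:mainortho} and~\ref{thm:mainherm}: it appeals to the general fact that a degree-$(1,1)$ rational function applied to $A$ and to $A+E$ (with $\mathrm{rank}\,E=k$) yields matrices differing by rank at most $k$, and then uses the invertibility of $\mathcal C$ to rule out a strict rank drop by contradiction. Your approach instead feeds the already-proven spectral characterizations through Sylvester's law of inertia. In effect you are carrying out, inside the proof of Lemma~\ref{lem:cayley}, precisely the congruence computation the paper postpones to the \emph{next} result (the lemma showing that Theorems~\ref{thm:mainortho} and~\ref{thm:mainherm} are equivalent via $\mathcal C$). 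The trade-off: the paper's argument is independent of the two characterization theorems and so keeps Lemma~\ref{lem:cayley} logically prior to them, while yours is more self-contained (you never need the uncited ``rational function preserves low-rank perturbation'' fact) and, as a bonus, the exact inertia match you obtain would also yield the paper's stronger ``rank exactly $k$'' conclusion, though you only claim the $\mathcal U_k$/$\mathcal H_k$ membership.
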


\begin{proof}
  We show that the Cayley transform (and its inverse) preserve the rank of the
  perturbation.  Note that both $\mathcal C(z)$ and $\mathcal C^{-1}(z)$ are degree
  $(1,1)$ rational functions. For a rational function $r(z)$ of degree (at most) $(d,d)$
  we know that, for any matrix $A$ and rank $k$ perturbation $E$, $r(A + E) - r(A)$ has
  rank at most $dk$.  It remains to prove that perturbation stays \emph{exactly} of rank
  $k$, and not less.  Let $A$ be Hermitian plus rank (exactly) $k$, and by contradiction,
  assume we can write $\mathcal C(A) = Q + E$, with $Q$ unitary and $\mathrm{rank}(E) = k'
  < k$.  Then, we would have that $\mathcal C^{-1}(Q + E) = A$ is a Hermitian plus rank
  $k''$ matrix where $k'' \leq k'$, leading to a contradiction.

  To prove that $\mathcal{C}(A)$ does not have $1$ in the spectrum, it suffices to note
  that $\mathcal{C}(z)$ is a bijection of the Riemann sphere, and maps the point at
  $\infty$ to $1$. Since the eigenvalues of $\mathcal C(A)$ are $\mathcal C(\lambda)$,
  with $\lambda$ the eigenvalues of $A$, we see the eigenvalue $1$ must be excluded. The
  same argument applies for the second case.
	
\end{proof}

Creating this bridge between low-rank perturbations of unitary and Hermitian
matrices enables to use the criterion that we have developed for 
detecting matrices in $\mathcal H_k$ to matrices in $\mathcal U_k$, 
and the opposite direction as well.
In fact, in the next lemma, we show that we can obtain alternative proofs for the
characterizations of $\mathcal{U}_k$ and $\mathcal{H}_k$ by simply applying the Cayley
transform.

\begin{lemma}
The Cayley transformation implies that Theorem~\ref{thm:mainortho} and
Theorem~\ref{thm:mainherm} are equivalent. 
\end{lemma}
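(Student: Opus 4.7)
The approach is to use the Cayley transform as a dictionary that translates both the rank of the perturbation and the spectral data on which the two theorems are based. Lemma~\ref{lem:cayley} already supplies half of the bridge: if $-i \notin \sigma(A)$, then $A \in \mathcal{H}_k$ if and only if $\mathcal{C}(A) \in \mathcal{U}_k$. All that remains is to show that the eigenvalue condition on $S(A) = \frac{1}{2i}(A - A^*)$ in Theorem~\ref{thm:mainherm} corresponds, under $A \mapsto \mathcal{C}(A)$, to the singular value condition on $\mathcal{C}(A)$ in Theorem~\ref{thm:mainortho}. Once this correspondence is established, assuming either theorem immediately yields the other.

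The crux is a short computation. Writing $\mathcal{C}(A) = (A - iI)(A + iI)^{-1}$ and expanding, one finds
\[
(A+iI)^*(A+iI) - (A-iI)^*(A-iI) = 4\,S(A),
\]
so after pre- and post-multiplying by $(A+iI)^{-*}$ and $(A+iI)^{-1}$ we obtain the key identity
\[
I - \mathcal{C}(A)^* \mathcal{C}(A) = 4\,(A+iI)^{-*}\, S(A)\, (A+iI)^{-1}.
\]
This displays $I - \mathcal{C}(A)^*\mathcal{C}(A)$ and $S(A)$ as congruent Hermitian matrices through the invertible matrix $2(A+iI)^{-1}$.

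Sylvester's law of inertia then gives that these two matrices share the same numbers of positive, negative, and zero eigenvalues. Since the eigenvalues of $I - \mathcal{C}(A)^*\mathcal{C}(A)$ are $1 - \sigma_j(\mathcal{C}(A))^2$, one concludes that the number of positive (resp.\ negative) eigenvalues of $S(A)$ equals the number of singular values of $\mathcal{C}(A)$ strictly below (resp.\ above) $1$. Hence the condition on $S(A)$ in Theorem~\ref{thm:mainherm} is equivalent to the condition on the singular values of $\mathcal{C}(A)$ in Theorem~\ref{thm:mainortho}. Combined with Lemma~\ref{lem:cayley}, this shows that either theorem implies the other; the reverse implication is obtained symmetrically by using $\mathcal{C}^{-1}$ and a matrix $A$ with $1 \notin \sigma(A)$.

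The main obstacle is the exceptional case in which $\mathcal{C}$ or $\mathcal{C}^{-1}$ is not defined on $A$, namely when $A$ has $-i$ or $1$ in its spectrum. The cleanest remedy is to replace $\mathcal{C}$ by a shifted Cayley/M\"obius transform $\mathcal{C}_\alpha(z) = (z - \alpha)/(z - \bar\alpha)$ with $\Im(\alpha) > 0$ chosen so that $\bar\alpha \notin \sigma(A)$; such an $\alpha$ always exists because $\sigma(A)$ is finite, and the analogous identity $I - \mathcal{C}_\alpha(A)^*\mathcal{C}_\alpha(A) = c\,(A - \bar\alpha I)^{-*} S(A)(A - \bar\alpha I)^{-1}$ for a suitable positive constant $c$ goes through unchanged. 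Alternatively, one may appeal to a perturbation argument, since both sides of the equivalence are invariant under a generic arbitrarily small shift $A \mapsto A + \varepsilon I$.
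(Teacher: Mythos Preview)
Your argument is essentially the same as the paper's: both establish a congruence between $S(\cdot)$ on the Hermitian side and $I - (\cdot)^*(\cdot)$ (equivalently $AA^*-I$) on the unitary side, and then invoke Sylvester's law of inertia. The only cosmetic difference is that the paper runs the computation through $\mathcal C^{-1}$ starting from the unitary side, whereas you run it through $\mathcal C$ starting from the Hermitian side.

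There is, however, a small gap in your handling of the exceptional case $1\in\sigma(A)$, needed for the direction Theorem~\ref{thm:mainherm} $\Rightarrow$ Theorem~\ref{thm:mainortho}. Your family $\mathcal C_\alpha(z)=(z-\alpha)/(z-\bar\alpha)$ always sends $\infty\mapsto 1$, so $\mathcal C_\alpha^{-1}$ has its pole at $1$ for \emph{every} choice of $\alpha$; shifting $\alpha$ therefore does not help on the unitary side. Your alternative, the additive shift $A\mapsto A+\varepsilon I$, also fails there: neither membership in $\mathcal U_k$ nor the singular-value condition is preserved under adding a multiple of the identity (e.g.\ $A=I\in\mathcal U_0$ but $(1+\varepsilon)I\notin\mathcal U_{n-1}$). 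The paper's remedy is simpler and correct: replace $A$ by $\xi A$ for a unimodular scalar $\xi$ chosen so that $1\notin\sigma(\xi A)$; this preserves both $\mathcal U_k$ and the singular values. Equivalently, you could enlarge your family to $\xi\,\mathcal C_\alpha$ with $|\xi|=1$, whose inverse has its pole at $\xi$ rather than at $1$.
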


\begin{proof}
	We start by proving
	that Theorem~\ref{thm:mainherm} implies Theorem~\ref{thm:mainortho}.
	Let $A$ be an arbitrary matrix, and assume that $1$ is not an eigenvalue.  
	We know by Lemma~\ref{lem:cayley} that $A$ is in $\mathcal U_k$
	if and only if $\mathcal C^{-1}(A) \in \mathcal H_k$. Now, $\mathcal C^{-1}(A)$
        will be a rank $k$ perturbation of a Hermitian matrix if and
	only if the Hermitian matrix $\frac{1}{2i} (\mathcal C^{-1}(A) - \mathcal C^{-1}(A)^*)$ 
	has at most $k$ positve eigenvalues and $k$ negative ones. 
	We can write 
	\[
	\frac{1}{2i} \left( \mathcal C^{-1}(A) - \mathcal C^{-1}(A)^*\right) = 
	\frac{1}{2} \cdot \left[ (A + I)^{-1} (A - I) + (A^* + I)^{-1} (A^* - I) \right], 
	\]
	Let us do a congruence
	by left-multiplying by $(A + I)$ and right multiplying by $(A^* + I)$. 
	This does not change the sign characteristic and yields
	\begin{align*}
	\frac{1}{2i} \cdot (A + I) \left[ \mathcal C^{-1}(A) - \mathcal C^{-1}(A)^* \right] 
	(A^* + I) &= \frac{1}{2} \left[ (A - I)(A^* + I) + (A + I) (A^* - I) \right]\\
	&= AA^* - I. 
	\end{align*}
	The  matrix above has eigenvalues $\lambda_i := \sigma_i^2(A) - 1$, where $\sigma_i(A)$ 
	are the singular values of $A$. Therefore, the positive eigenvalues of $\frac{1}{2i} \left(\mathcal C^{-1}(A) - \mathcal C^{-1}(A)^* \right)$  correspond to 
	singular values of $A$ larger than $1$, whereas the negative eigenvalues link to
        the singular values smaller
	than $1$. In particular, 
	$A$ is unitary plus rank $k$, if and only if the characterization
	given in Theorem~\ref{thm:mainortho} is satisfied. 
	
	Let us now consider the case
	where $A$ has $1$ as an eigenvalue. Then, we can multiply it by a unimodular scalar $\xi$ to
        get  
	$A' = \xi \cdot A$, where
	$A'$ does not have $1$ as an eigenvalue. Clearly
	$A' \in \mathcal U_k \iff A \in \mathcal U_k$, and $\sigma_i(A') = \sigma_i(A)$. Applying the previous steps to $A'$ yields the characterization
	for $A$ as well, completing the proof. 
	
	The other implication (that is, Theorem~\ref{thm:mainortho} implies Theorem~\ref{thm:mainherm}) can be obtained following the same steps backwards.
\end{proof}

\begin{remark}
	We emphasize that, although in principle the Cayley transform enables
	to study unitary matrices looking at Hermitian ones (and the other
	way around), it cannot be used to answer questions about the
	closest unitary or Hermitian matrix. In fact, this transformation
	does not preserve the distances. 
\end{remark}

\section{Construction of the representations}
\label{sec:lanczos}

We present in this section a proof-of-concept algorithm to show how one can use the results in this paper to construct, given a matrix $A \in \mathcal{H}_k$ (resp. $\mathcal{U}_k$), matrices $G,B \in \mathbb{C}^{n\times \ell}$, with $\ell \leq k$, such that $A-GB^*$ is Hermitian (resp. unitary). The procedure identifies the minimum possible rank $\ell$ automatically given the matrix $A$ only, and requires $\mathcal O(n^2\ell)$ flops for a full matrix.

\subsection{Constructing a representation $A = H + GB^*$}

If $A = H + GB^*$, with $H = H^*$, the
Hermitian matrix $S(A) = \frac{1}{2i} (A - A^*)$ has rank $k_+ + k_- \leq 2k$ (where $k_+, k_-$ are as in the proof of Theorem~\ref{thm:mainherm}), hence the Lanczos algorithm (with a random starting vector $b$) applied to $S(A)$ will break down after at most $k_+ + k_-$ steps, in exact arithmetic, giving an approximation $S(A) = \frac{1}{2i} (A - A^*) \approx W T W^*$.

We apply to $T$ the procedure described in the proof of Theorem~\ref{thm:mainherm}, which
is fully constructive, to recover a decomposition $
  T \approx  \hat{B}\hat{C}^* +\hat{C}\hat{B}^* 
$, neglecting the eigenvalues smaller than a prescribed truncation threshold. 
This implies that we can write $\frac{1}{2i} (A - A^*) \approx B C^* + C B^*$, where
$C := W\hat{C}, B := W\hat{B}$. Then, we construct the final decomposition by setting
$H = A - 2i C B^*$. The procedure is sketched in the pseudocode of Algorithm~\ref{alg:lanczos}. 

\begin{algorithm}
\caption{Lanczos-based scheme to recover the Hermitian-plus-low-rank decomposition. A truncation threshold $\varepsilon$ is given.}\label{alg:lanczos}
\begin{algorithmic}[1]
	\Procedure{hk\_find}{$A$, $\varepsilon$} 
	\State $S \gets \frac{1}{2i} (A - A^*)$
	\State $W, T \gets \Call{Lanczos}{S, \varepsilon}$ \Comment{$S = WTW^* + \mathcal O(\varepsilon)$}
	\State $\hat{B}, \hat{C} \gets \Call{ComputeFactors}{T, \varepsilon}$
	  \Comment{$T = \hat{B}\hat{C}^* + \hat{C}\hat{B}^* + \mathcal{O}(\varepsilon)$, using Lemma~\ref{lem2x2herm}}
	\State $B, C \gets W \hat{B}, W \hat{C}$
	\State $G \gets 2iC$
	\State $H \gets A - GB^*$
	\State \Return $\frac{1}{2} (H + H^*), G, B$. 
	\EndProcedure
\end{algorithmic}
\end{algorithm}

Note that in the unlikely case in which the process terminates early, $WTW^*$ is not equal
to $S$, but only to its restriction on the maximal Krylov subspace. Hence, when we detect
that $WTW^*-S$ is still too large, we can continue the Lanczos iteration but with a randomly chosen vector.

\begin{remark}
	A reconstruction procedure can be obtained from any method to approximate the
        range of $S$ in $O(n^2k)$ flops.
        Indeed, once one obtains an orthogonal basis $W$ for $\operatorname{Im} S$, one
        can compute $W^*SW = T$ and continue as above.
\end{remark}

\subsection{Constructing a representation $A = Q + GB^*$}

The case of a unitary-plus-rank-$k$ matrix can be solved with the same ideas, relying on the Golub--Kahan bidiagonalization.

If $A\in\mathcal{U}_k$, then Theorem~\ref{thm:mainortho} shows that the matrices $A^*A$ and $AA^*$ have (at most) $m: = k_+ + k_- + 1$ distinct eigenvalues: $k_+$ greater than 1, $k_-$ smaller than 1, and the eigenvalue $1$, possibly with high multiplicity. Hence, the Lanczos process on each of them breaks down after at most $m$ steps. Indeed, if $v_1,v_2,\dots,v_n$ is an eigenvector basis for $A^*A$, ordered so that $v_m,v_{m+1},\dots,v_n$ are eigenvectors with eigenvalue $1$, then we can write the initial vector for the Lanczos process as $b = \alpha_1 v_1 + \dots + \alpha_{m-1}v_{m-1} + w$ where $w = \alpha_m v_m + \alpha_{m+1}v_{m+1}+ \dots + \alpha_n v_n$; the vector $w$ is also an eigenvector $w$ with eigenvalue $1$. Thus $b$ belongs to the invariant subspace $\operatorname{span}(v_1,v_2,\dots,v_{m-1},w)$, which is also (generically) its maximal Krylov subspace.

It is well established that the Golub-Kahan bidiagonalization is equivalent to running the Lanczos process to $A^*A$ and $AA^*$, hence it will also break down after $m$ steps (generically, when there is no earlier breakdown), returning matrices $U_1, M, V_1$ such that the decomposition
\[
A \approx \begin{bmatrix}
    U_1 & U_2
\end{bmatrix}
\begin{bmatrix}
    M \\
    &I
\end{bmatrix}
\begin{bmatrix}
    V_1 & V_2
\end{bmatrix}^*
\]
holds, with $M\in\mathbb{C}^{m\times m}$ upper bidiagonal, and $\begin{bmatrix}
    U_1 & U_2
\end{bmatrix}$ and $\begin{bmatrix}
    V_1 & V_2
\end{bmatrix}$ orthogonal.

We can use the constructive argument in the proof of Theorem~\ref{thm:mainortho} to decompose $M \approx Q + \hat{G}\hat{B}^*$, neglecting the singular values such that $|\sigma_i-1|$ is below a certain truncation threshold. Then, $G = U_1\hat{G}$, $B = V_1\hat{B}$ give the required decomposition. The procedure is sketched in the pseudocode of Algorithm~\ref{alg:golubkahan}.
\begin{algorithm}
	\caption{Golub--Kahan-based scheme to recover the unitary-plus-low-rank decomposition. A truncation threshold $\varepsilon$ is given.}\label{alg:golubkahan}
	\begin{algorithmic}[1]
		\Procedure{uk\_find}{$A$, $\varepsilon$} 
		\State $U_1, M, V_1 \gets \Call{GolubKahan}{A, \varepsilon}$ \Comment{$A = [U_1,U_2]\operatorname{diag}(M,I)[V_1,V_2]^* + \mathcal \mathcal{O}(\varepsilon)$}
		\State $\hat{G}, \hat{B} \gets \Call{ComputeFactors}{M, \varepsilon}$ 	\Comment{$M = Q + \hat{G}\hat{B}^* + \mathcal{O}(\varepsilon)$ using Lemma~\ref{lem:2x2}}
		\State $G, B \gets U_1 \hat{G}, V_1\hat{B}$
		\State $Q \gets A - GB^*$
		\State \Return $Q, G, B$. 
		\EndProcedure
	\end{algorithmic}
\end{algorithm}

\section{Numerical experiments}
\label{sec:experiments}

\subsection{Accuracy of the reconstruction procedure} \label{sec:accuracy}

We have implemented Algorithm~\ref{alg:lanczos} and Algorithm~\ref{alg:golubkahan}
in Matlab, and ran some tests with randomly generated matrices 
to validate the procedures. The algorithms appear to be quite robust 
and succeeded in all our
tests in retrieving a decomposition with the correct rank and a small error.

In each test, we generated a random $n\times n$ Hermitian or unitary plus rank-$k$ matrix. For the Hermitian case, this is achieved with the Matlab commands
\begin{verbatim}
rng('default');
H = randn(n, n) + 1i * randn(n, n);
H = H + H';
[U,~] = qr(randn(n, k) + 1i * randn(n, k));
[V,~] = qr(randn(n, k) + 1i * randn(n, k));
A = H + U * diag(sv) * V';
\end{verbatim}
Here \verb!sv! are logarithmically distributed singular values between $1$ and a parameter $\sigma$. In the unitary case, the matrix $H$ is replaced by the $Q$ factor
of a QR factorization of a random $n \times n$ matrix with entries distributed
as $N(0, 1)$, i.e., \verb|[Q,~] = qr(randn(n))|.

\begin{filecontents}{hk_sigma.dat}
	idx beta1 beta2 beta3 beta4
	1	0.055793	0.056365	0.057834	0.063463
	2	0.85355	0.84605	0.82854	0.77974
	3	0.13707	0.20019	0.2818	0.40812
	4	0.14571	0.21909	0.32323	0.43201
	5	0.031589	0.077699	0.19673	0.4121
	6	0.028584	0.063269	0.13887	0.34927
	7	0.013436	0.042212	0.11472	0.26286
	8	0.006486	0.024487	0.093619	0.26498
	9	0.0010363	0.0057989	0.03724	0.23972
	10	0.0011264	0.005916	0.029852	0.19421
	11	0.00029216	0.0024476	0.019867	0.11882
	12	0.00020296	0.0016097	0.013399	0.13707
	13	9.5656e-05	0.0010023	0.0087079	0.060089
	14	4.6715e-05	0.00064838	0.009279	0.093159
	15	5.0464e-06	9.6374e-05	0.0020474	0.065345
	16	6.7353e-06	0.00012742	0.0024628	0.054283
	17	1.1303e-06	3.2713e-05	0.00096151	0.030285
	18	1.1702e-06	3.4376e-05	0.0010493	0.030171
	19	4.6567e-07	1.7373e-05	0.00053582	0.016846
	20	2.1528e-07	1.0296e-05	0.00055128	0.023574
	21	5.0431e-08	3.2826e-06	0.00021263	0.014653
	22	5.2084e-08	3.4202e-06	0.00022409	0.015001
	23	9.5834e-09	9.3651e-07	8.7885e-05	0.0070835
	24	9.6685e-09	9.862e-07	0.00010313	0.010964
	25	1.3884e-13	1.5287e-13	1.7772e-13	4.4624e-13
	26	4.2025e-15	4.1237e-15	4.1234e-15	4.1968e-15
\end{filecontents}

\begin{filecontents}{hk_k.dat}
	idx beta1 beta2 beta3 beta4
	1	0.038521	0.054003	0.034671	0.06544
	2	0.96081	0.62636	0.81673	0.72337
	3	0.12315	0.49109	0.4346	0.39243
	4	0.092485	0.22248	0.32947	0.68543
	5	0.0050953	0.1262	0.24368	0.3125
	6	0.0078315	0.12399	0.39316	0.32038
	7	0.0029346	0.051075	0.16128	0.20282
	8	0.00072061	0.029969	0.084499	0.15657
	9	0.0001294	0.020459	0.09447	0.16191
	10	9.2337e-05	0.021665	0.068351	0.12133
	11	9.8365e-14	0.006228	0.037971	0.078673
	12	4.3315e-15	0.0045669	0.041837	0.071653
	13	0	0.0025449	0.019818	0.072755
	14	0	0.0017966	0.018766	0.079765
	15	0	0.0010258	0.0087272	0.027393
	16	0	0.00066905	0.0086645	0.033427
	17	0	0.00031554	0.0056792	0.02313
	18	0	0.00025128	0.004732	0.016616
	19	0	0.00012268	0.0030516	0.0099307
	20	0	6.7708e-05	0.0021838	0.010694
	21	0	1.6314e-13	0.001505	0.010451
	22	0	4.1775e-15	0.0013821	0.0081821
	23	0	0	0.00083537	0.0049419
	24	0	0	0.0008286	0.0048546
	25	0	0	0.00029147	0.0024306
	26	0	0	0.0004272	0.0033309
	27	0	0	0.00020006	0.0024791
	28	0	0	0.00013286	0.0011759
	29	0	0	8.6354e-05	0.0011031
	30	0	0	0.00015623	0.0010036
	31	0	0	1.8901e-13	0.00076993
	32	0	0	4.2474e-15	0.00083068
	33	0	0	0	0.00055628
	34	0	0	0	0.00054731
	35	0	0	0	0.00022845
	36	0	0	0	0.00032611
	37	0	0	0	0.00011266
	38	0	0	0	0.00013153
	39	0	0	0	5.0392e-05
	40	0	0	0	9.3775e-05
	41	0	0	0	5.1749e-13
	42	0	0	0	3.994e-15
\end{filecontents}

\begin{filecontents}{uk_sigma.dat}
	idx beta1 beta2 beta3 beta4
	1	0.058263	0.058886	0.060988	0.071645
	2	0.97148	0.96493	0.93886	0.81074
	3	0.10307	0.16791	0.30272	0.57295
	4	0.17435	0.22711	0.25362	0.27822
	5	0.070809	0.16119	0.31752	0.47087
	6	0.027152	0.059317	0.12393	0.31221
	7	0.0068349	0.02677	0.11963	0.26672
	8	0.005287	0.017877	0.057626	0.2402
	9	0.00099479	0.0059997	0.041017	0.21697
	10	0.0010169	0.0051597	0.026764	0.15098
	11	0.00049871	0.0034829	0.02021	0.17803
	12	0.00024802	0.0021893	0.019224	0.089761
	13	5.0025e-05	0.00061498	0.0079434	0.1128
	14	4.3566e-05	0.00054881	0.0070497	0.073144
	15	5.6522e-06	0.0001069	0.0021396	0.06914
	16	4.8053e-06	9.5088e-05	0.00202	0.050407
	17	2.3639e-06	5.9347e-05	0.0012423	0.027706
	18	1.5403e-06	4.6472e-05	0.001458	0.026629
	19	2.3108e-07	1.0011e-05	0.0004515	0.026323
	20	2.5482e-07	1.1158e-05	0.00049808	0.021957
	21	5.1564e-08	3.3114e-06	0.0001985	0.011873
	22	3.1733e-08	2.2184e-06	0.00017628	0.016146
	23	1.132e-08	1.0266e-06	7.8632e-05	0.0048991
	24	8.4296e-09	8.9531e-07	0.00010325	0.011304
	25	9.0119e-14	9.4173e-14	1.0989e-13	2.7066e-13
	26	2.1562e-15	2.1276e-15	2.0781e-15	2.082e-15
\end{filecontents}

\begin{filecontents}{uk_k.dat}
	idx beta1 beta2 beta3 beta4
	1	0.077232	0.07766	0.044823	0.096595
	2	0.2232	0.41115	0.44771	0.42052
	3	0.24588	0.51494	0.45479	0.60585
	4	0.1059	0.19729	0.345	0.29718
	5	0.0055	0.15357	0.39025	0.30033
	6	0.0058832	0.1346	0.29519	0.1805
	7	0.0020069	0.048999	0.2383	0.34954
	8	0.00094825	0.033152	0.11252	0.23429
	9	0.00022323	0.0199	0.092848	0.16386
	10	0.00010132	0.019217	0.061233	0.10413
	11	2.7438e-14	0.0054495	0.036764	0.11551
	12	2.1172e-15	0.0060112	0.025574	0.085317
	13	0	0.0017641	0.023042	0.079996
	14	0	0.0016522	0.016847	0.043268
	15	0	0.0012676	0.01064	0.040356
	16	0	0.0007499	0.0085247	0.034253
	17	0	0.00029161	0.0052474	0.018452
	18	0	0.00020305	0.0062324	0.011082
	19	0	0.00010591	0.0032015	0.017698
	20	0	0.0001044	0.00107	0.015836
	21	0	7.0351e-14	0.0011635	0.0089604
	22	0	2.0762e-15	0.0022691	0.0065484
	23	0	0	0.00059329	0.0057096
	24	0	0	0.00072857	0.0043791
	25	0	0	0.00046645	0.0029281
	26	0	0	0.00030765	0.0033065
	27	0	0	0.00029557	0.001901
	28	0	0	0.00014148	0.0017416
	29	0	0	9.1856e-05	0.0011658
	30	0	0	0.00011857	0.0011277
	31	0	0	9.7056e-14	0.00066686
	32	0	0	2.0026e-15	0.0005836
	33	0	0	0	0.00045116
	34	0	0	0	0.00044548
	35	0	0	0	0.00014701
	36	0	0	0	0.00032268
	37	0	0	0	0.00019721
	38	0	0	0	0.00017275
	39	0	0	0	5.6357e-05
	40	0	0	0	0.00012226
	41	0	0	0	1.4289e-13
	42	0	0	0	2.1384e-15	
\end{filecontents}

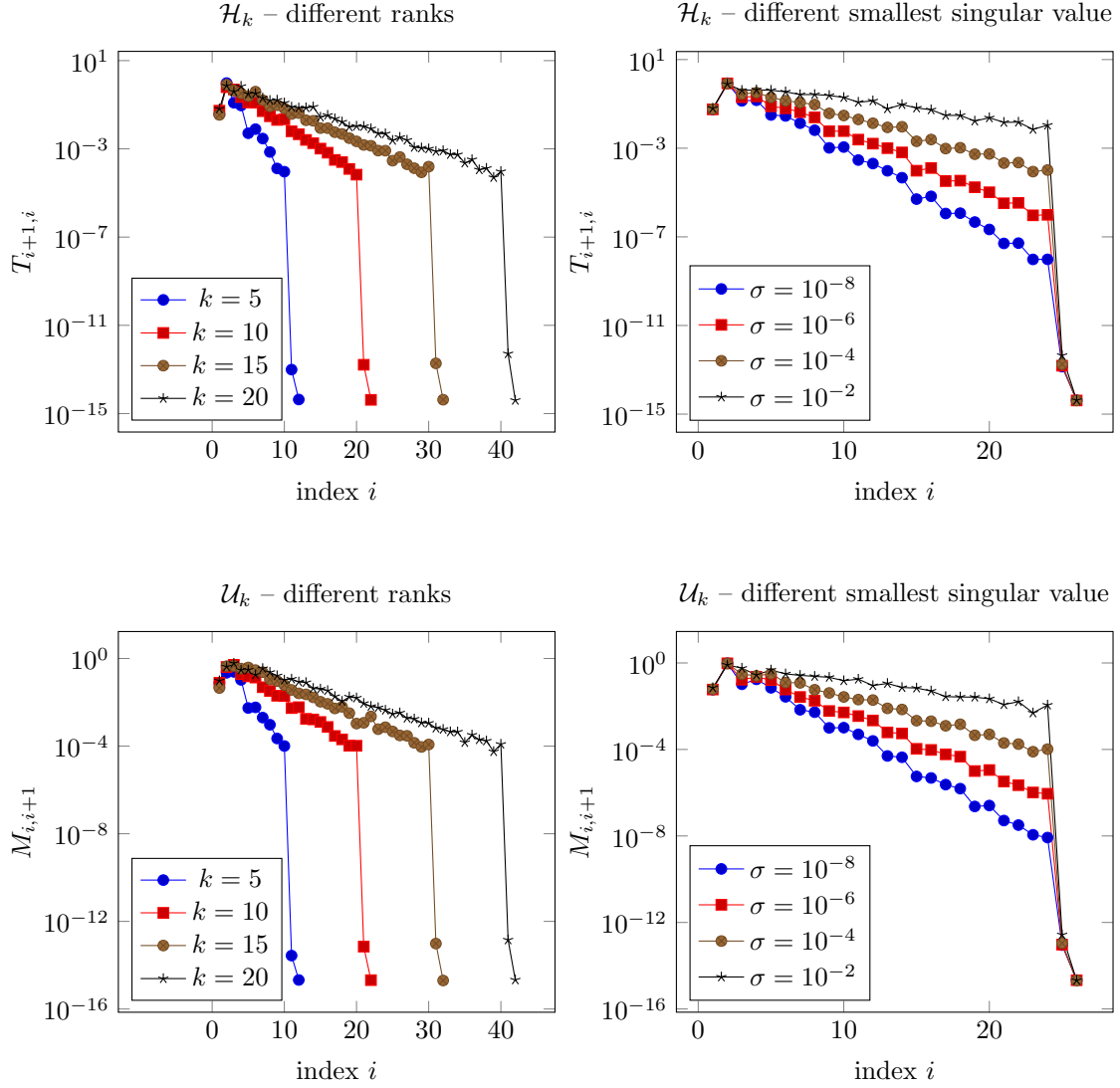
\begin{figure}
	\begin{tikzpicture}
	\begin{semilogyaxis}[width=.5\textwidth, height=0.32\textheight, xlabel={index $i$}, ylabel={$T_{i+1,i}$}, legend pos=south west, xmin =-13, xtick = {0,10,20,30,40}, title={$\mathcal H_k$ -- different 	ranks}]
	
	\addplot table [x=idx,y=beta1] {hk_k.dat};
	\addplot table [x=idx,y=beta2] {hk_k.dat};
	\addplot table [x=idx,y=beta3] {hk_k.dat};
	\addplot table [x=idx,y=beta4] {hk_k.dat};
	
	\legend{$k = 5$,
		$k = 10$,
		$k = 15$,
		$k = 20$}
	
	\end{semilogyaxis}
	\end{tikzpicture}~\begin{tikzpicture}
	\begin{semilogyaxis}[width=.5\textwidth, height=0.32\textheight, xlabel={index $i$}, ylabel={$T_{i+1,i}$}, 
	legend pos=south west,title={$\mathcal H_k$ -- different
		smallest singular value}]
	
	\addplot table [x=idx,y=beta1] {hk_sigma.dat};
	\addplot table [x=idx,y=beta2] {hk_sigma.dat};
	\addplot table [x=idx,y=beta3] {hk_sigma.dat};
	\addplot table [x=idx,y=beta4] {hk_sigma.dat};
	
	\legend{$\sigma = 10^{-8}$,
		$\sigma = 10^{-6}$,
		$\sigma = 10^{-4}$,
		$\sigma = 10^{-2}$}
	
	\end{semilogyaxis}
	\end{tikzpicture} \\[24pt]
	\begin{tikzpicture}
	\begin{semilogyaxis}[width=.5\textwidth, height=0.32\textheight, xlabel={index $i$}, ylabel={$M_{i,i+1}$}, legend pos=south west, xmin =-13, xtick = {0,10,20,30,40}, title={$\mathcal U_k$ -- different 	ranks}]
	
	\addplot table [x=idx,y=beta1] {uk_k.dat};
	\addplot table [x=idx,y=beta2] {uk_k.dat};
	\addplot table [x=idx,y=beta3] {uk_k.dat};
	\addplot table [x=idx,y=beta4] {uk_k.dat};
	
	\legend{$k = 5$,
		$k = 10$,
		$k = 15$,
		$k = 20$}
	\end{semilogyaxis}
	\end{tikzpicture}~\begin{tikzpicture}
	\begin{semilogyaxis}[width=.5\textwidth, height=0.32\textheight, xlabel={index $i$}, ylabel={$M_{i,i+1}$}, legend pos=south west, title={$\mathcal U_k$ -- different
		smallest singular value}]
	
	\addplot table [x=idx,y=beta1] {uk_sigma.dat};
	\addplot table [x=idx,y=beta2] {uk_sigma.dat};
	\addplot table [x=idx,y=beta3] {uk_sigma.dat};
	\addplot table [x=idx,y=beta4] {uk_sigma.dat};
	
	\legend{$\sigma = 10^{-8}$,
		$\sigma = 10^{-6}$,
		$\sigma = 10^{-4}$,
		$\sigma = 10^{-2}$}
	\end{semilogyaxis}
	\end{tikzpicture}
	\caption{(Top) Magnitude of the subdiagonal entries $T_{i+1,i}$ (in the
		Lanczos process for the Hermitian case), or (bottom) the superdiagonal ones $M_{i,i+1}$
		(in the Golub-Kahan bidiagonalization scheme), computed in
		the recovery of the approximate factors $H,G,B$ in a 
		Hermitian plus-low-rank matrix $A = H + GB^*$, or $Q,G,B$ of 
		the unitary plus-low-rank matrix $A = Q + GB^*$. The
		tests have been performed for different smallest singular values $\sigma$ of $GB^*$
		and different ranks $k$. 
	} 
	\label{fig:hkbeta}
\end{figure}

We ran experiments with varying values of $n$, $k$, and $\sigma$; we report
the convergence history by plotting the values of the subdiagonal
entries obtained in the Lanczos process (in the Hermitian case), 
or in the Golub-Kahan bidiagonalization (for the unitary case). Experiments revealed that the parameter
$n$ has no visible effect on the convergence or the accuracy, 
so we only plotted the behavior
with different values of $k$ and $\sigma$, which are reported in Figure~\ref{fig:hkbeta}. 
The graphs show that there
is a sharp drop in their magnitude after $2k$ steps, which is what is
expected since $\operatorname{rank}(S)=2k$, generically. 
The
magnitude of the subdiagonal entries $T_{i+1,i}$ 
or superdiagonal ones $M_{i,i+1}$ reflects the decay
in the singular values in the rank correction. 

For the Hermitian case, 
the relative residual $\norm{\frac12(H+H^*)+GB^*-A}_2 / {\|A\|_2}$ has 
been verified to be around $6\cdot 10^{-17}$ in all the tests. 
Note that there is a difference in Algorithm~\ref{alg:lanczos} and 
Algorithm~\ref{alg:golubkahan}: in the former, the matrix $H$ is guaranteed to be Hermitian,
since it is symmetrized explicitly at the end of the algorithm, so it makes sense to
measure the reconstruction error as $\norm{\frac12(H+H^*)+GB^*-A}_2 / {\|A\|_2}$. In the
latter, the unitary factor $Q$ is obtained by the difference $Q = A - GB^*$: hence, we expect $\norm{Q + GB^* - A}_2 / \norm{A}_2$ to be of the order of the machine
precision (the error is given just by the subtraction), but $Q$ will only be
approximately unitary. Therefore, in this case we measure the error as 
$\max_{j} |\sigma_j(Q) - 1|$, which is the distance of $Q$ to a unitary matrix
in the Euclidean norm. 
In the experiments reported in Figure~\ref{fig:hkbeta}, the errors are enclosed in 
$[3u, 4u]$, where $u$ is the machine precision $u \approx 2.22 \cdot 10^{-16}$.

\subsection{Rank structure in linearizations}

A classical example of matrices with Hermitian or unitary plus low-rank structure
are linearizations of polynomials and matrix polynomials. The
most well-known linearization is probably the classical Frobenius companion matrix, 
obtained in MATLAB by the command \texttt{compan(p)}, where \texttt{p} is a vector
with the coefficients of a polynomial.
For many linearizations, one can find, by direct inspection, a value $k$ such that
they are $\mathcal{H}_k$ or $\mathcal{U}_k$; for instance the Frobenius companion form 
is seen to be $\mathcal{U}_1$, as one can turn it into a cyclic shift matrix by changing only
the top row.
In this section we use some of these examples, to test the recovery procedure,
and to confirm that the computed values $k$  are optimal.

\subsubsection{The Fiedler pentadiagonal linearization}

A classical variant of the scalar companion matrix is the pentadiagonal Fiedler matrix
obtained by permuting the elementary Fiedler factors according to the odd-even permutation.  Given a monic polynomial of degree $n$, $p(x)=x^{n}-\sum_{i=0}^{n-1} p_i x^i$, define
$$
F_0:=\left[\begin{array}{cc}p_0& \\
&I_{n-1}\end{array}\right],
$$
$$
F_i:=\left[\begin{array}{ccc}I_{i-1} && \\
&G(p_i)& \\
&&I_{n-i-1}\end{array}\right],\quad G(p_i):=\left[\begin{array}{cc}0&1 \\
1&p_i\end{array}\right], \quad i=1, \ldots, n-1.
$$
Then, the matrix $F=F_1F_3\cdots F_{2\lfloor \frac{n}{2} \rfloor -1}\, F_0F_2 \cdots F_{2\lfloor\frac{n-1}{2}\rfloor}$ is a linearization of $p(x)$ and has the pentadiagonal structure depicted in Figure~\ref{fig:spy_fiedler}. From~\cite{delcorso2018factoring,de2013condition} we know that these matrices are unitary-plus-rank-$k$ with $k$ at most $\lceil \frac{n}{2}\rceil$.  In particular one can observe that 
\begin{equation}\label{uni_sparse}
F=Q+GB^*,
\end{equation}  
where $Q$ is an orthogonal matrix (depicted in red in Figure~\ref{fig:spy_fiedler})  whose nonzeros are precisely in position $(2; 1)$, $(n-1 ; n)$ 
and those of the form $(2j-1; 2j + 1) $ and $(2j; 2j + 2)$ for $ j\ge1$; it is the matrix constructed analogously to $F$ but starting from the polynomial $p(x)=x^n-1$. Indeed, the nonzero entries of $F-Q$ (depicted in blue in Figure~\ref{fig:spy_fiedler}, plus an additional one in $(2;1)$) appear only in its odd-indexed columns, plus the last one; hence the rank of this correction is clearly bounded by $\lceil \frac{n}{2}\rceil$.

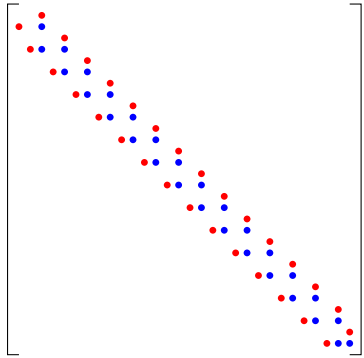
\begin{figure}
\begin{center}
\begin{tikzpicture}[scale=0.15,rotate=270]
	\def\csz{.3}
	\def\fiedlersz{30}
	\pgfmathparse{\fiedlersz-2}
	\foreach \j in {1, 3, ..., \pgfmathresult} {
		\fill[red] (\j,\j+2) circle (\csz);
		\fill[red] (\j+3,\j+1) circle (\csz);
	}
	\fill[red] (2,1) circle (\csz);
	\pgfmathparse{\fiedlersz-3}
	\foreach \j in {1, 3, ..., \pgfmathresult} {
	\fill[blue] (\j+1,\j+2) circle (\csz);
	\fill[blue] (\j+3,\j+2) circle (\csz);
	}
	\pgfmathparse{\fiedlersz-1}
	\fill[blue] (\fiedlersz,\fiedlersz) circle (\csz);
	\fill[red] (\pgfmathresult,\fiedlersz) circle (\csz);
	\pgfmathsetmacro{\fiedlerszp}{\fiedlersz+1}
	\draw (0,1) -- (0,0) -- (\fiedlerszp,0) -- (\fiedlerszp,1);
	\draw (0,\fiedlersz) -- (0,\fiedlerszp) -- 
	    (\fiedlerszp,\fiedlerszp) -- (\fiedlerszp,\fiedlersz);
\end{tikzpicture}
\end{center}
\caption{Nonzero pattern of a Fiedler pentadiagonal matrix of size $30$. The entries
  displayed in red are all $1$ except for $F_{2,1}=p_0$. In particular, when $p(x)=x^n-1$, the matrix $F$ is a (unitary) permutation matrix that has $\pm 1$ in the entries in red and $0$ elsewhere.} \label{fig:spy_fiedler}

\end{figure}
Applying Algorithm~\ref{alg:golubkahan} to a pentadiagonal Fiedler of size $n=512$ generated from a monic random scalar polynomial we obtain the subdiagonal entries $T_{i+1, i}$ plotted in Figure~\ref{fig:pentafiedler}. As expected we get that the first subdiagonal entry below the threshold $\epsilon=1.0e-14$ is $T_{513, 512}$, so that $m=512$ and $k_+=k_-=256$. The algorithm computes correctly a representation $F=\hat{Q}+\hat{G}\hat{B}^*$ with $\hat{G}, \hat{B}\in \mathbb{C}^{512\times 256}$. Thus the experiments reveal that the bound $k \leq \lceil \frac{n}{2} \rceil$ is tight. However, the computed unitary term $\hat{Q}$ does not coincide with the one obtained theoretically in~\eqref{uni_sparse}. This fact should not be surprising, since the representation is not necessarily unique.

\pgfplotstableread{
	idx beta
	   1.0000e+00   1.7148e+00
	2.0000e+00   2.1349e+00
	3.0000e+00   2.7653e+00
	4.0000e+00   1.9604e+00
	5.0000e+00   2.0559e+00
	6.0000e+00   1.7828e+00
	7.0000e+00   1.7070e+00
	8.0000e+00   1.6834e+00
	9.0000e+00   1.5829e+00
	1.0000e+01   1.7627e+00
	1.1000e+01   1.7912e+00
	1.2000e+01   1.6986e+00
	1.3000e+01   1.6818e+00
	1.4000e+01   1.6364e+00
	1.5000e+01   1.6394e+00
	1.6000e+01   1.7058e+00
	1.7000e+01   1.8079e+00
	1.8000e+01   1.5661e+00
	1.9000e+01   1.4864e+00
	2.0000e+01   1.7358e+00
	2.1000e+01   1.5178e+00
	2.2000e+01   1.5969e+00
	2.3000e+01   1.8932e+00
	2.4000e+01   1.6109e+00
	2.5000e+01   1.6016e+00
	2.6000e+01   1.4360e+00
	2.7000e+01   1.6185e+00
	2.8000e+01   1.8300e+00
	2.9000e+01   1.5959e+00
	3.0000e+01   1.5510e+00
	3.1000e+01   1.8817e+00
	3.2000e+01   1.4965e+00
	3.3000e+01   1.4736e+00
	3.4000e+01   1.5990e+00
	3.5000e+01   1.7947e+00
	3.6000e+01   1.7001e+00
	3.7000e+01   1.6689e+00
	3.8000e+01   1.3793e+00
	3.9000e+01   1.4706e+00
	4.0000e+01   1.4657e+00
	4.1000e+01   1.5816e+00
	4.2000e+01   1.7169e+00
	4.3000e+01   1.4972e+00
	4.4000e+01   1.5848e+00
	4.5000e+01   1.4857e+00
	4.6000e+01   1.3206e+00
	4.7000e+01   1.1982e+00
	4.8000e+01   1.3294e+00
	4.9000e+01   1.2304e+00
	5.0000e+01   1.3329e+00
	5.1000e+01   1.1989e+00
	5.2000e+01   1.2036e+00
	5.3000e+01   1.3261e+00
	5.4000e+01   1.2797e+00
	5.5000e+01   1.4357e+00
	5.6000e+01   1.2007e+00
	5.7000e+01   1.2529e+00
	5.8000e+01   1.2174e+00
	5.9000e+01   1.1579e+00
	6.0000e+01   1.2215e+00
	6.1000e+01   1.2312e+00
	6.2000e+01   1.2574e+00
	6.3000e+01   1.2561e+00
	6.4000e+01   1.2538e+00
	6.5000e+01   1.1972e+00
	6.6000e+01   1.2726e+00
	6.7000e+01   1.2245e+00
	6.8000e+01   1.3434e+00
	6.9000e+01   1.2185e+00
	7.0000e+01   1.2552e+00
	7.1000e+01   1.1520e+00
	7.2000e+01   1.1123e+00
	7.3000e+01   1.0914e+00
	7.4000e+01   1.3040e+00
	7.5000e+01   1.2312e+00
	7.6000e+01   1.1812e+00
	7.7000e+01   1.2354e+00
	7.8000e+01   1.2933e+00
	7.9000e+01   1.2645e+00
	8.0000e+01   1.0905e+00
	8.1000e+01   1.1386e+00
	8.2000e+01   1.1703e+00
	8.3000e+01   1.3325e+00
	8.4000e+01   1.2079e+00
	8.5000e+01   1.1414e+00
	8.6000e+01   1.1696e+00
	8.7000e+01   1.1099e+00
	8.8000e+01   1.1179e+00
	8.9000e+01   9.8846e-01
	9.0000e+01   1.0959e+00
	9.1000e+01   1.0693e+00
	9.2000e+01   1.1374e+00
	9.3000e+01   1.2029e+00
	9.4000e+01   1.2638e+00
	9.5000e+01   1.1276e+00
	9.6000e+01   1.1140e+00
	9.7000e+01   1.0809e+00
	9.8000e+01   1.0093e+00
	9.9000e+01   1.1438e+00
	1.0000e+02   1.0403e+00
	1.0100e+02   1.0560e+00
	1.0200e+02   1.0526e+00
	1.0300e+02   1.0416e+00
	1.0400e+02   1.0029e+00
	1.0500e+02   9.5410e-01
	1.0600e+02   1.0198e+00
	1.0700e+02   1.0134e+00
	1.0800e+02   9.7543e-01
	1.0900e+02   1.0702e+00
	1.1000e+02   9.8705e-01
	1.1100e+02   9.8321e-01
	1.1200e+02   1.0960e+00
	1.1300e+02   1.0184e+00
	1.1400e+02   1.0971e+00
	1.1500e+02   1.0935e+00
	1.1600e+02   9.7206e-01
	1.1700e+02   9.5257e-01
	1.1800e+02   1.0140e+00
	1.1900e+02   9.8049e-01
	1.2000e+02   1.0215e+00
	1.2100e+02   9.2829e-01
	1.2200e+02   1.0829e+00
	1.2300e+02   8.8885e-01
	1.2400e+02   1.0603e+00
	1.2500e+02   9.4774e-01
	1.2600e+02   9.8162e-01
	1.2700e+02   1.0794e+00
	1.2800e+02   9.5858e-01
	1.2900e+02   9.9020e-01
	1.3000e+02   1.0124e+00
	1.3100e+02   8.6157e-01
	1.3200e+02   1.0847e+00
	1.3300e+02   1.0273e+00
	1.3400e+02   8.6190e-01
	1.3500e+02   1.0271e+00
	1.3600e+02   9.8540e-01
	1.3700e+02   1.0284e+00
	1.3800e+02   8.8453e-01
	1.3900e+02   9.5761e-01
	1.4000e+02   9.4574e-01
	1.4100e+02   1.0232e+00
	1.4200e+02   1.0138e+00
	1.4300e+02   1.0421e+00
	1.4400e+02   9.5858e-01
	1.4500e+02   9.3912e-01
	1.4600e+02   1.0332e+00
	1.4700e+02   8.8396e-01
	1.4800e+02   1.1253e+00
	1.4900e+02   9.2079e-01
	1.5000e+02   1.0654e+00
	1.5100e+02   1.0003e+00
	1.5200e+02   9.9961e-01
	1.5300e+02   1.0006e+00
	1.5400e+02   9.9266e-01
	1.5500e+02   9.0850e-01
	1.5600e+02   1.0251e+00
	1.5700e+02   9.4212e-01
	1.5800e+02   9.6457e-01
	1.5900e+02   8.6799e-01
	1.6000e+02   9.5225e-01
	1.6100e+02   1.0328e+00
	1.6200e+02   9.9720e-01
	1.6300e+02   9.3860e-01
	1.6400e+02   1.0329e+00
	1.6500e+02   9.9063e-01
	1.6600e+02   9.7119e-01
	1.6700e+02   1.0803e+00
	1.6800e+02   9.0888e-01
	1.6900e+02   9.2724e-01
	1.7000e+02   1.0727e+00
	1.7100e+02   8.9580e-01
	1.7200e+02   9.4735e-01
	1.7300e+02   9.2391e-01
	1.7400e+02   9.3941e-01
	1.7500e+02   1.0188e+00
	1.7600e+02   9.0250e-01
	1.7700e+02   8.4552e-01
	1.7800e+02   1.0439e+00
	1.7900e+02   9.8254e-01
	1.8000e+02   8.6846e-01
	1.8100e+02   8.8750e-01
	1.8200e+02   1.0817e+00
	1.8300e+02   8.7985e-01
	1.8400e+02   9.4593e-01
	1.8500e+02   9.3154e-01
	1.8600e+02   8.7575e-01
	1.8700e+02   8.1670e-01
	1.8800e+02   9.0110e-01
	1.8900e+02   8.3177e-01
	1.9000e+02   9.6754e-01
	1.9100e+02   1.0423e+00
	1.9200e+02   8.6270e-01
	1.9300e+02   9.0079e-01
	1.9400e+02   1.1134e+00
	1.9500e+02   7.8362e-01
	1.9600e+02   8.9980e-01
	1.9700e+02   1.0587e+00
	1.9800e+02   9.2178e-01
	1.9900e+02   9.0120e-01
	2.0000e+02   9.6755e-01
	2.0100e+02   1.0297e+00
	2.0200e+02   9.4004e-01
	2.0300e+02   9.7544e-01
	2.0400e+02   1.0695e+00
	2.0500e+02   8.5263e-01
	2.0600e+02   9.6232e-01
	2.0700e+02   7.8814e-01
	2.0800e+02   9.6675e-01
	2.0900e+02   8.6924e-01
	2.1000e+02   9.5570e-01
	2.1100e+02   7.4275e-01
	2.1200e+02   6.6856e-01
	2.1300e+02   8.7464e-01
	2.1400e+02   9.4281e-01
	2.1500e+02   9.1526e-01
	2.1600e+02   6.5755e-01
	2.1700e+02   6.0021e-01
	2.1800e+02   8.3721e-01
	2.1900e+02   7.3324e-01
	2.2000e+02   8.4552e-01
	2.2100e+02   8.0906e-01
	2.2200e+02   7.0123e-01
	2.2300e+02   8.8566e-01
	2.2400e+02   7.2140e-01
	2.2500e+02   7.2572e-01
	2.2600e+02   5.8623e-01
	2.2700e+02   7.0497e-01
	2.2800e+02   6.1279e-01
	2.2900e+02   7.4351e-01
	2.3000e+02   7.7298e-01
	2.3100e+02   6.8829e-01
	2.3200e+02   6.4394e-01
	2.3300e+02   6.2911e-01
	2.3400e+02   6.5707e-01
	2.3500e+02   7.3713e-01
	2.3600e+02   6.7077e-01
	2.3700e+02   6.6770e-01
	2.3800e+02   6.1788e-01
	2.3900e+02   6.1972e-01
	2.4000e+02   7.3468e-01
	2.4100e+02   6.7246e-01
	2.4200e+02   6.5880e-01
	2.4300e+02   6.4636e-01
	2.4400e+02   6.3741e-01
	2.4500e+02   6.6910e-01
	2.4600e+02   6.7899e-01
	2.4700e+02   6.7384e-01
	2.4800e+02   7.5111e-01
	2.4900e+02   6.2836e-01
	2.5000e+02   5.4817e-01
	2.5100e+02   5.5990e-01
	2.5200e+02   6.1684e-01
	2.5300e+02   6.5664e-01
	2.5400e+02   5.9543e-01
	2.5500e+02   6.0806e-01
	2.5600e+02   5.9997e-01
	2.5700e+02   7.2960e-01
	2.5800e+02   7.0302e-01
	2.5900e+02   5.4696e-01
	2.6000e+02   5.9816e-01
	2.6100e+02   6.6441e-01
	2.6200e+02   6.0416e-01
	2.6300e+02   5.9109e-01
	2.6400e+02   6.1051e-01
	2.6500e+02   5.5979e-01
	2.6600e+02   5.9966e-01
	2.6700e+02   5.3115e-01
	2.6800e+02   4.9552e-01
	2.6900e+02   5.8823e-01
	2.7000e+02   5.8758e-01
	2.7100e+02   5.7849e-01
	2.7200e+02   5.6807e-01
	2.7300e+02   6.6225e-01
	2.7400e+02   6.3846e-01
	2.7500e+02   5.6555e-01
	2.7600e+02   5.4551e-01
	2.7700e+02   5.5081e-01
	2.7800e+02   5.8394e-01
	2.7900e+02   5.5150e-01
	2.8000e+02   5.5106e-01
	2.8100e+02   5.8557e-01
	2.8200e+02   6.1988e-01
	2.8300e+02   5.3401e-01
	2.8400e+02   5.4476e-01
	2.8500e+02   6.5971e-01
	2.8600e+02   5.1582e-01
	2.8700e+02   6.4636e-01
	2.8800e+02   5.6741e-01
	2.8900e+02   5.1568e-01
	2.9000e+02   4.8332e-01
	2.9100e+02   5.6679e-01
	2.9200e+02   5.6090e-01
	2.9300e+02   4.8792e-01
	2.9400e+02   6.0571e-01
	2.9500e+02   5.3680e-01
	2.9600e+02   4.8987e-01
	2.9700e+02   4.7453e-01
	2.9800e+02   5.0977e-01
	2.9900e+02   5.0958e-01
	3.0000e+02   4.4990e-01
	3.0100e+02   5.1315e-01
	3.0200e+02   4.9786e-01
	3.0300e+02   5.3806e-01
	3.0400e+02   5.7645e-01
	3.0500e+02   5.1716e-01
	3.0600e+02   5.4080e-01
	3.0700e+02   5.0721e-01
	3.0800e+02   5.8121e-01
	3.0900e+02   5.1167e-01
	3.1000e+02   4.8855e-01
	3.1100e+02   6.0272e-01
	3.1200e+02   4.8246e-01
	3.1300e+02   5.0960e-01
	3.1400e+02   4.7132e-01
	3.1500e+02   4.7396e-01
	3.1600e+02   5.6016e-01
	3.1700e+02   4.3468e-01
	3.1800e+02   4.6745e-01
	3.1900e+02   4.8436e-01
	3.2000e+02   4.5786e-01
	3.2100e+02   5.3775e-01
	3.2200e+02   4.8755e-01
	3.2300e+02   4.8777e-01
	3.2400e+02   4.6022e-01
	3.2500e+02   4.7444e-01
	3.2600e+02   5.6266e-01
	3.2700e+02   4.2189e-01
	3.2800e+02   4.0361e-01
	3.2900e+02   4.5472e-01
	3.3000e+02   4.6922e-01
	3.3100e+02   4.0402e-01
	3.3200e+02   5.3405e-01
	3.3300e+02   5.0119e-01
	3.3400e+02   4.2083e-01
	3.3500e+02   4.3177e-01
	3.3600e+02   4.8281e-01
	3.3700e+02   4.2398e-01
	3.3800e+02   4.2604e-01
	3.3900e+02   4.1384e-01
	3.4000e+02   4.2996e-01
	3.4100e+02   3.9925e-01
	3.4200e+02   3.2655e-01
	3.4300e+02   3.8532e-01
	3.4400e+02   4.3529e-01
	3.4500e+02   4.8842e-01
	3.4600e+02   4.4086e-01
	3.4700e+02   3.6872e-01
	3.4800e+02   4.1839e-01
	3.4900e+02   3.6128e-01
	3.5000e+02   4.3327e-01
	3.5100e+02   4.7028e-01
	3.5200e+02   3.4094e-01
	3.5300e+02   4.2037e-01
	3.5400e+02   3.9151e-01
	3.5500e+02   4.3758e-01
	3.5600e+02   3.7412e-01
	3.5700e+02   3.6883e-01
	3.5800e+02   3.8544e-01
	3.5900e+02   3.4052e-01
	3.6000e+02   3.3698e-01
	3.6100e+02   3.3251e-01
	3.6200e+02   3.7210e-01
	3.6300e+02   4.1050e-01
	3.6400e+02   3.4558e-01
	3.6500e+02   3.6794e-01
	3.6600e+02   3.5350e-01
	3.6700e+02   3.7032e-01
	3.6800e+02   3.6543e-01
	3.6900e+02   3.1566e-01
	3.7000e+02   3.6334e-01
	3.7100e+02   3.0898e-01
	3.7200e+02   3.5501e-01
	3.7300e+02   3.8432e-01
	3.7400e+02   2.8683e-01
	3.7500e+02   3.6595e-01
	3.7600e+02   3.7129e-01
	3.7700e+02   2.4216e-01
	3.7800e+02   3.9230e-01
	3.7900e+02   3.3999e-01
	3.8000e+02   2.7999e-01
	3.8100e+02   3.9148e-01
	3.8200e+02   3.3619e-01
	3.8300e+02   3.1029e-01
	3.8400e+02   2.9326e-01
	3.8500e+02   3.3576e-01
	3.8600e+02   3.4688e-01
	3.8700e+02   2.8799e-01
	3.8800e+02   3.5718e-01
	3.8900e+02   3.0378e-01
	3.9000e+02   3.1981e-01
	3.9100e+02   3.1392e-01
	3.9200e+02   2.8336e-01
	3.9300e+02   3.5430e-01
	3.9400e+02   3.0716e-01
	3.9500e+02   3.0267e-01
	3.9600e+02   4.1038e-01
	3.9700e+02   3.1218e-01
	3.9800e+02   3.0242e-01
	3.9900e+02   3.1489e-01
	4.0000e+02   2.4742e-01
	4.0100e+02   3.0799e-01
	4.0200e+02   2.9457e-01
	4.0300e+02   2.2774e-01
	4.0400e+02   3.0001e-01
	4.0500e+02   2.8176e-01
	4.0600e+02   2.7335e-01
	4.0700e+02   3.0601e-01
	4.0800e+02   3.1446e-01
	4.0900e+02   2.6060e-01
	4.1000e+02   2.4436e-01
	4.1100e+02   2.7273e-01
	4.1200e+02   3.8707e-01
	4.1300e+02   2.3491e-01
	4.1400e+02   3.0317e-01
	4.1500e+02   2.7112e-01
	4.1600e+02   2.5919e-01
	4.1700e+02   3.2125e-01
	4.1800e+02   2.9669e-01
	4.1900e+02   2.6380e-01
	4.2000e+02   2.3263e-01
	4.2100e+02   3.3917e-01
	4.2200e+02   2.7646e-01
	4.2300e+02   2.9806e-01
	4.2400e+02   2.6762e-01
	4.2500e+02   3.0897e-01
	4.2600e+02   2.8463e-01
	4.2700e+02   2.6487e-01
	4.2800e+02   2.5381e-01
	4.2900e+02   4.1439e-01
	4.3000e+02   2.3189e-01
	4.3100e+02   2.2051e-01
	4.3200e+02   3.5576e-01
	4.3300e+02   2.6685e-01
	4.3400e+02   3.4123e-01
	4.3500e+02   2.7840e-01
	4.3600e+02   1.8680e-01
	4.3700e+02   2.7253e-01
	4.3800e+02   3.1787e-01
	4.3900e+02   2.2047e-01
	4.4000e+02   1.7430e-01
	4.4100e+02   3.4219e-01
	4.4200e+02   3.4745e-01
	4.4300e+02   1.8851e-01
	4.4400e+02   2.2754e-01
	4.4500e+02   3.7947e-01
	4.4600e+02   2.4372e-01
	4.4700e+02   1.2957e-01
	4.4800e+02   2.7764e-01
	4.4900e+02   3.6889e-01
	4.5000e+02   2.0025e-01
	4.5100e+02   2.1969e-01
	4.5200e+02   2.0839e-01
	4.5300e+02   3.8503e-01
	4.5400e+02   1.7227e-01
	4.5500e+02   1.8347e-01
	4.5600e+02   1.7354e-01
	4.5700e+02   2.6748e-01
	4.5800e+02   3.0713e-01
	4.5900e+02   2.0999e-01
	4.6000e+02   1.6545e-01
	4.6100e+02   2.1192e-01
	4.6200e+02   3.4574e-01
	4.6300e+02   2.0640e-01
	4.6400e+02   1.6622e-01
	4.6500e+02   2.0603e-01
	4.6600e+02   2.1888e-01
	4.6700e+02   2.5906e-01
	4.6800e+02   1.6995e-01
	4.6900e+02   1.8962e-01
	4.7000e+02   1.4664e-01
	4.7100e+02   2.9694e-01
	4.7200e+02   2.6616e-01
	4.7300e+02   1.2589e-01
	4.7400e+02   1.8770e-01
	4.7500e+02   2.8949e-01
	4.7600e+02   3.5840e-01
	4.7700e+02   1.3309e-01
	4.7800e+02   1.7797e-01
	4.7900e+02   1.2448e-01
	4.8000e+02   3.5938e-01
	4.8100e+02   2.8246e-01
	4.8200e+02   6.9364e-02
	4.8300e+02   1.9897e-01
	4.8400e+02   2.7922e-01
	4.8500e+02   3.0860e-01
	4.8600e+02   1.3956e-01
	4.8700e+02   1.5865e-01
	4.8800e+02   1.1690e-01
	4.8900e+02   3.6048e-01
	4.9000e+02   2.1364e-01
	4.9100e+02   5.4719e-02
	4.9200e+02   7.9071e-02
	4.9300e+02   3.1849e-01
	4.9400e+02   1.8570e-01
	4.9500e+02   1.1246e-01
	4.9600e+02   8.4609e-02
	4.9700e+02   4.5928e-01
	4.9800e+02   3.9751e-02
	4.9900e+02   2.0705e-01
	5.0000e+02   4.3650e-02
	5.0100e+02   2.9434e-01
	5.0200e+02   3.9770e-02
	5.0300e+02   1.4944e-02
	5.0400e+02   1.5066e-01
	5.0500e+02   9.3061e-02
	5.0600e+02   8.8438e-03
	5.0700e+02   1.4174e-01
	5.0800e+02   4.8269e-02
	5.0900e+02   2.5858e-03
	5.1000e+02   9.8405e-06
	5.1100e+02   4.4370e-12
	5.1200e+02   2.2296e-31
}\loadedtable
\begin{figure}
\begin{tikzpicture}
\begin{semilogyaxis}[width=\textwidth, height=0.4\textheight, xlabel={index $i$}, ylabel={$M_{i,i+1}$}, legend pos=north east]

\addplot+[mark size=0.7pt] table[x=idx,y=beta] \loadedtable;

\end{semilogyaxis}
\end{tikzpicture}
\caption{Superdiagonal entries obtained from applying the Golub-Kahan bidiagonalization scheme to  the pentadiagonal Fiedler matrix of a random polynomial with $n=513$.} \label{fig:pentafiedler}
\end{figure}
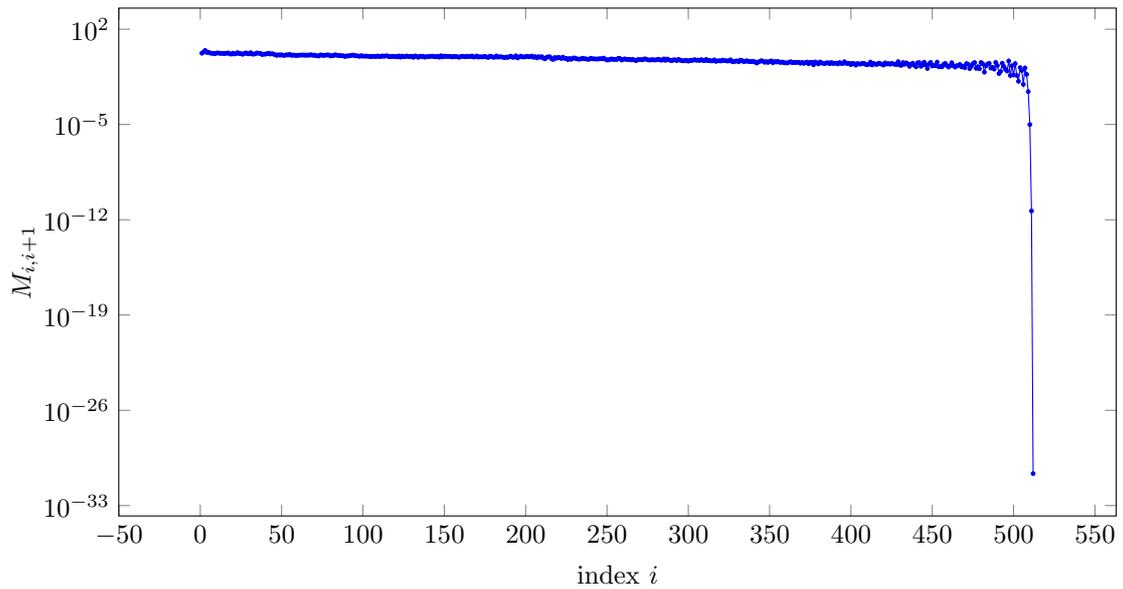

\subsubsection{The colleague linearization}
Consider an $m \times m$ matrix polynomial $P(\lambda)$ expressed in the Chebyshev basis
\[
  P(\lambda) = P_0 T_0(\lambda) + P_1 T_1(\lambda) + \ldots + P_d T_d(\lambda), \qquad 
  T_{j+1}(\lambda) = 2\lambda T_j(\lambda) - T_{j-1}(\lambda), 
\]
and $T_0(\lambda) = 1, T_1(\lambda) = \lambda$. Such a polynomial can be linearized 
using the \emph{colleague matrix} 
\[
C=
\begin{bmatrix}
    P_1 & P_2 & P_3 & \cdots & P_d\\
    \frac12I &  & \frac12I \\
    & \ddots & & \ddots \\
    & & \frac12I & & \frac12I\\
    & & & I
\end{bmatrix} \in \mathbb{R}^{md\times md},
\]
partitioned into blocks of size $m\times m$ each. It is simple to see that this matrix belongs to $\mathcal{H}_{2m}$, as it is sufficient to modify the first and last block row
to obtain the Hermitian matrix $\operatorname{tridiag}(\frac12,0,\frac12) \otimes I_m$. However, $D^{-1}CD \in \mathcal{H}_m$ for a suitable diagonal scaling matrix $D$, so one may wonder if the rank $2m$ of this correction  is optimal or if it can be lowered with a more clever construction.


We apply Algorithm~\ref{alg:lanczos} to a large-scale (sparse) matrix $C$, with $d=m=100$, in which the first block row contains random coefficients (obtained with \texttt{randn(m)}). We plot the subdiagonal entries $T_{i+1,i}$ obtained in Figure~\ref{fig:chebyshev}.
\pgfplotstableread{
	idx beta
   1.0000e+00   1.4573e+01
   2.0000e+00   1.0020e+02
   3.0000e+00   1.7214e+01
   4.0000e+00   1.0055e+02
   5.0000e+00   1.5532e+01
   6.0000e+00   9.6828e+01
   7.0000e+00   2.4465e+01
   8.0000e+00   7.1062e+01
   9.0000e+00   7.1892e+01
   1.0000e+01   2.3788e+01
   1.1000e+01   9.7222e+01
   1.2000e+01   1.7025e+01
   1.3000e+01   9.9472e+01
   1.4000e+01   2.3768e+01
   1.5000e+01   6.8516e+01
   1.6000e+01   7.5711e+01
   1.7000e+01   2.1509e+01
   1.8000e+01   9.9436e+01
   1.9000e+01   1.5953e+01
   2.0000e+01   1.0037e+02
   2.1000e+01   1.6502e+01
   2.2000e+01   9.9801e+01
   2.3000e+01   1.5550e+01
   2.4000e+01   9.8006e+01
   2.5000e+01   2.2910e+01
   2.6000e+01   6.5984e+01
   2.7000e+01   7.6830e+01
   2.8000e+01   2.0675e+01
   2.9000e+01   9.9568e+01
   3.0000e+01   1.7090e+01
   3.1000e+01   9.7152e+01
   3.2000e+01   2.6554e+01
   3.3000e+01   5.7604e+01
   3.4000e+01   8.2990e+01
   3.5000e+01   1.8103e+01
   3.6000e+01   9.8009e+01
   3.7000e+01   1.5984e+01
   3.8000e+01   1.0038e+02
   3.9000e+01   1.6137e+01
   4.0000e+01   1.0136e+02
   4.1000e+01   1.6175e+01
   4.2000e+01   9.7012e+01
   4.3000e+01   2.7758e+01
   4.4000e+01   5.3055e+01
   4.5000e+01   8.6679e+01
   4.6000e+01   1.7932e+01
   4.7000e+01   9.9133e+01
   4.8000e+01   1.6959e+01
   4.9000e+01   9.5243e+01
   5.0000e+01   3.5375e+01
   5.1000e+01   4.4141e+01
   5.2000e+01   9.1299e+01
   5.3000e+01   1.7085e+01
   5.4000e+01   9.7528e+01
   5.5000e+01   1.4665e+01
   5.6000e+01   9.9435e+01
   5.7000e+01   1.5288e+01
   5.8000e+01   9.8425e+01
   5.9000e+01   1.5257e+01
   6.0000e+01   9.2902e+01
   6.1000e+01   4.2665e+01
   6.2000e+01   3.1389e+01
   6.3000e+01   9.5728e+01
   6.4000e+01   1.4443e+01
   6.5000e+01   9.8779e+01
   6.6000e+01   1.9818e+01
   6.7000e+01   8.0410e+01
   6.8000e+01   6.2009e+01
   6.9000e+01   2.4592e+01
   7.0000e+01   9.7167e+01
   7.1000e+01   1.5073e+01
   7.2000e+01   1.0051e+02
   7.3000e+01   1.3658e+01
   7.4000e+01   9.9697e+01
   7.5000e+01   1.3440e+01
   7.6000e+01   9.5720e+01
   7.7000e+01   2.1136e+01
   7.8000e+01   5.9493e+01
   7.9000e+01   8.1572e+01
   8.0000e+01   1.7897e+01
   8.1000e+01   9.9163e+01
   8.2000e+01   1.3207e+01
   8.3000e+01   9.2338e+01
   8.4000e+01   3.7501e+01
   8.5000e+01   3.3327e+01
   8.6000e+01   9.4189e+01
   8.7000e+01   1.5449e+01
   8.8000e+01   9.8448e+01
   8.9000e+01   1.2820e+01
   9.0000e+01   9.8379e+01
   9.1000e+01   1.2723e+01
   9.2000e+01   9.8493e+01
   9.3000e+01   1.7464e+01
   9.4000e+01   6.9842e+01
   9.5000e+01   7.2270e+01
   9.6000e+01   1.6741e+01
   9.7000e+01   9.8954e+01
   9.8000e+01   1.2839e+01
   9.9000e+01   9.4159e+01
   1.0000e+02   3.5121e+01
   1.0100e+02   3.4771e+01
   1.0200e+02   9.3473e+01
   1.0300e+02   1.4002e+01
   1.0400e+02   9.9897e+01
   1.0500e+02   1.1689e+01
   1.0600e+02   1.0022e+02
   1.0700e+02   1.2929e+01
   1.0800e+02   9.7725e+01
   1.0900e+02   1.7130e+01
   1.1000e+02   5.9591e+01
   1.1100e+02   7.9215e+01
   1.1200e+02   1.7424e+01
   1.1300e+02   9.7596e+01
   1.1400e+02   1.2741e+01
   1.1500e+02   9.1948e+01
   1.1600e+02   3.7748e+01
   1.1700e+02   2.8745e+01
   1.1800e+02   9.5212e+01
   1.1900e+02   1.2574e+01
   1.2000e+02   9.9413e+01
   1.2100e+02   1.1680e+01
   1.2200e+02   9.7683e+01
   1.2300e+02   1.2334e+01
   1.2400e+02   9.7774e+01
   1.2500e+02   2.1364e+01
   1.2600e+02   4.8526e+01
   1.2700e+02   8.5782e+01
   1.2800e+02   1.3486e+01
   1.2900e+02   9.8454e+01
   1.3000e+02   1.2581e+01
   1.3100e+02   7.9545e+01
   1.3200e+02   5.9116e+01
   1.3300e+02   1.8859e+01
   1.3400e+02   9.8502e+01
   1.3500e+02   1.0761e+01
   1.3600e+02   9.8381e+01
   1.3700e+02   1.1806e+01
   1.3800e+02   9.8172e+01
   1.3900e+02   1.9946e+01
   1.4000e+02   5.3277e+01
   1.4100e+02   8.3116e+01
   1.4200e+02   1.1763e+01
   1.4300e+02   9.7770e+01
   1.4400e+02   1.0798e+01
   1.4500e+02   9.8031e+01
   1.4600e+02   2.1484e+01
   1.4700e+02   4.5922e+01
   1.4800e+02   8.7637e+01
   1.4900e+02   1.2106e+01
   1.5000e+02   9.8388e+01
   1.5100e+02   1.2718e+01
   1.5200e+02   7.2965e+01
   1.5300e+02   6.6108e+01
   1.5400e+02   1.2180e+01
   1.5500e+02   9.7930e+01
   1.5600e+02   9.3716e+00
   1.5700e+02   9.9291e+01
   1.5800e+02   9.0227e+00
   1.5900e+02   9.7686e+01
   1.6000e+02   1.4553e+01
   1.6100e+02   6.0717e+01
   1.6200e+02   7.8630e+01
   1.6300e+02   1.2966e+01
   1.6400e+02   9.6660e+01
   1.6500e+02   1.0632e+01
   1.6600e+02   8.1186e+01
   1.6700e+02   5.6916e+01
   1.6800e+02   1.2591e+01
   1.6900e+02   9.8630e+01
   1.7000e+02   9.1815e+00
   1.7100e+02   9.8602e+01
   1.7200e+02   7.7768e+00
   1.7300e+02   9.7530e+01
   1.7400e+02   1.4620e+01
   1.7500e+02   6.3157e+01
   1.7600e+02   7.6567e+01
   1.7700e+02   9.4106e+00
   1.7800e+02   9.8238e+01
   1.7900e+02   1.1569e+01
   1.8000e+02   6.9882e+01
   1.8100e+02   6.9459e+01
   1.8200e+02   1.0541e+01
   1.8300e+02   9.7252e+01
   1.8400e+02   8.9321e+00
   1.8500e+02   9.7735e+01
   1.8600e+02   7.8358e+00
   1.8700e+02   9.5700e+01
   1.8800e+02   1.9462e+01
   1.8900e+02   3.7299e+01
   1.9000e+02   9.0354e+01
   1.9100e+02   7.0818e+00
   1.9200e+02   9.6624e+01
   1.9300e+02   1.9595e+01
   1.9400e+02   3.6215e+01
   1.9500e+02   9.0917e+01
   1.9600e+02   5.7729e+00
   1.9700e+02   9.8677e+01
   1.9800e+02   7.6004e+00
   1.9900e+02   9.7947e+01
   2.0000e+02   8.7574e+00
   2.0100e+02   7.9441e+01
   2.0200e+02   6.0104e+01
   2.0300e+02   7.0235e+00
   2.0400e+02   9.7699e+01
   2.0500e+02   1.0050e+01
   2.0600e+02   5.0222e+01
   2.0700e+02   8.3535e+01
   2.0800e+02   5.5972e+00
   2.0900e+02   9.9229e+01
   2.1000e+02   8.2225e+00
   2.1100e+02   9.8833e+01
   2.1200e+02   5.8993e+00
   2.1300e+02   6.9966e+01
   2.1400e+02   6.8581e+01
   2.1500e+02   6.3722e+00
   2.1600e+02   9.7237e+01
   2.1700e+02   1.1196e+01
   2.1800e+02   3.2478e+01
   2.1900e+02   9.3657e+01
   2.2000e+02   6.2106e+00
   2.2100e+02   9.8256e+01
   2.2200e+02   2.8022e+00
   2.2300e+02   9.7059e+01
   2.2400e+02   1.4583e+01
   2.2500e+02   1.8741e+01
   2.2600e+02   9.4892e+01
   2.2700e+02   2.9438e+00
   2.2800e+02   4.7728e+01
   2.2900e+02   8.5010e+01
   2.3000e+02   1.5104e+00
   2.3100e+02   9.8666e+01
   2.3200e+02   1.1936e+00
   2.3300e+02   4.8237e-01
   2.3400e+02   8.6395e-02
   2.3500e+02   5.0667e-02
   2.3600e+02   4.8994e-01
   2.3700e+02   7.7502e-03
   2.3800e+02   4.9172e-01
   2.3900e+02   8.2790e-03
   2.4000e+02   4.9134e-01
   2.4100e+02   8.6330e-03
   2.4200e+02   4.9001e-01
   2.4300e+02   8.9065e-03
   2.4400e+02   4.9249e-01
   2.4500e+02   8.8411e-03
   2.4600e+02   4.9111e-01
   2.4700e+02   8.2136e-03
   2.4800e+02   4.9200e-01
   2.4900e+02   8.3311e-03
   2.5000e+02   4.9223e-01
   2.5100e+02   8.0956e-03
   2.5200e+02   4.8038e-01
   2.5300e+02   1.0558e-01
   2.5400e+02   3.5497e-02
   2.5500e+02   4.9064e-01
   2.5600e+02   7.6561e-03
   2.5700e+02   4.9244e-01
   2.5800e+02   8.2807e-03
   2.5900e+02   4.9214e-01
   2.6000e+02   7.1349e-03
   2.6100e+02   4.9307e-01
   2.6200e+02   6.8521e-03
   2.6300e+02   4.9292e-01
   2.6400e+02   7.2741e-03
   2.6500e+02   4.9346e-01
   2.6600e+02   6.3365e-03
   2.6700e+02   4.9277e-01
   2.6800e+02   7.5347e-03
   2.6900e+02   4.9271e-01
   2.7000e+02   7.5996e-03
   2.7100e+02   4.9056e-01
   2.7200e+02   4.2922e-02
   2.7300e+02   7.0239e-02
   2.7400e+02   4.8885e-01
   2.7500e+02   7.1254e-03
   2.7600e+02   4.9339e-01
   2.7700e+02   6.4205e-03
   2.7800e+02   4.9350e-01
   2.7900e+02   6.4108e-03
   2.8000e+02   4.9355e-01
   2.8100e+02   6.9500e-03
   2.8200e+02   4.9368e-01
   2.8300e+02   6.2280e-03
   2.8400e+02   4.9305e-01
   2.8500e+02   5.9023e-03
   2.8600e+02   4.9435e-01
   2.8700e+02   6.1153e-03
   2.8800e+02   4.9316e-01
   2.8900e+02   6.3805e-03
   2.9000e+02   4.9309e-01
   2.9100e+02   3.8454e-02
   2.9200e+02   8.2567e-02
   2.9300e+02   4.8566e-01
   2.9400e+02   6.8875e-03
   2.9500e+02   4.9384e-01
   2.9600e+02   5.2474e-03
   2.9700e+02   4.9405e-01
   2.9800e+02   5.4988e-03
   2.9900e+02   4.9492e-01
   3.0000e+02   4.9965e-03
   3.0100e+02   4.9552e-01
   3.0200e+02   4.6956e-03
   3.0300e+02   4.9460e-01
   3.0400e+02   6.2399e-03
   3.0500e+02   4.9443e-01
   3.0600e+02   5.5994e-03
   3.0700e+02   4.9355e-01
   3.0800e+02   6.7988e-03
   3.0900e+02   4.8550e-01
   3.1000e+02   8.9692e-02
   3.1100e+02   3.1845e-02
   3.1200e+02   4.9348e-01
   3.1300e+02   4.7846e-03
   3.1400e+02   4.9565e-01
   3.1500e+02   4.7040e-03
   3.1600e+02   4.9547e-01
   3.1700e+02   4.1900e-03
   3.1800e+02   4.9549e-01
   3.1900e+02   3.8329e-03
   3.2000e+02   4.9675e-01
   3.2100e+02   3.4883e-03
   3.2200e+02   4.9607e-01
   3.2300e+02   4.5904e-03
   3.2400e+02   4.9480e-01
   3.2500e+02   4.8536e-03
   3.2600e+02   4.9460e-01
   3.2700e+02   2.0741e-02
   3.2800e+02   1.2748e-01
   3.2900e+02   4.7891e-01
   3.3000e+02   4.3099e-03
   3.3100e+02   4.9511e-01
   3.3200e+02   4.7137e-03
   3.3300e+02   4.9577e-01
   3.3400e+02   3.6792e-03
   3.3500e+02   4.9728e-01
   3.3600e+02   3.0472e-03
   3.3700e+02   4.9642e-01
   3.3800e+02   2.9520e-03
   3.3900e+02   4.9561e-01
   3.4000e+02   4.6837e-03
   3.4100e+02   4.9607e-01
   3.4200e+02   4.1472e-03
   3.4300e+02   4.9612e-01
   3.4400e+02   1.3084e-02
   3.4500e+02   1.2944e-01
   3.4600e+02   4.7892e-01
   3.4700e+02   3.5855e-03
   3.4800e+02   4.9716e-01
   3.4900e+02   3.0618e-03
   3.5000e+02   4.9662e-01
   3.5100e+02   2.7477e-03
   3.5200e+02   4.9747e-01
   3.5300e+02   2.2224e-03
   3.5400e+02   4.9758e-01
   3.5500e+02   2.7511e-03
   3.5600e+02   4.9758e-01
   3.5700e+02   2.6030e-03
   3.5800e+02   4.9751e-01
   3.5900e+02   2.3484e-03
   3.6000e+02   4.7887e-01
   3.6100e+02   1.3603e-01
   3.6200e+02   7.1474e-03
   3.6300e+02   4.9694e-01
   3.6400e+02   3.0390e-03
   3.6500e+02   4.9753e-01
   3.6600e+02   2.3100e-03
   3.6700e+02   4.9807e-01
   3.6800e+02   2.1488e-03
   3.6900e+02   4.9789e-01
   3.7000e+02   1.5757e-03
   3.7100e+02   4.9827e-01
   3.7200e+02   2.0936e-03
   3.7300e+02   4.9822e-01
   3.7400e+02   1.9199e-03
   3.7500e+02   4.9498e-01
   3.7600e+02   5.3435e-02
   3.7700e+02   1.4863e-02
   3.7800e+02   4.9782e-01
   3.7900e+02   2.0884e-03
   3.8000e+02   4.9829e-01
   3.8100e+02   1.5233e-03
   3.8200e+02   4.9853e-01
   3.8300e+02   1.4489e-03
   3.8400e+02   4.9849e-01
   3.8500e+02   1.2080e-03
   3.8600e+02   4.9876e-01
   3.8700e+02   1.2489e-03
   3.8800e+02   4.9901e-01
   3.8900e+02   1.3635e-03
   3.9000e+02   4.9077e-01
   3.9100e+02   8.8609e-02
   3.9200e+02   4.1658e-03
   3.9300e+02   4.9938e-01
   3.9400e+02   7.1844e-04
   3.9500e+02   4.9884e-01
   3.9600e+02   7.8529e-04
   3.9700e+02   4.9955e-01
   3.9800e+02   6.0061e-04
   3.9900e+02   4.9884e-01
   4.0000e+02   9.6488e-04
   4.0100e+02   4.9952e-01
   4.0200e+02   3.4459e-04
   4.0300e+02   4.9460e-01
   4.0400e+02   7.0649e-02
   4.0500e+02   2.3069e-03
   4.0600e+02   4.9958e-01
   4.0700e+02   2.5382e-04
   4.0800e+02   4.9974e-01
   4.0900e+02   2.5951e-04
   4.1000e+02   4.9987e-01
   4.1100e+02   2.3629e-05
   4.1200e+02   4.9998e-01
   4.1300e+02   1.6935e-05
   4.1400e+02   4.7230e-23
}\loadedtable
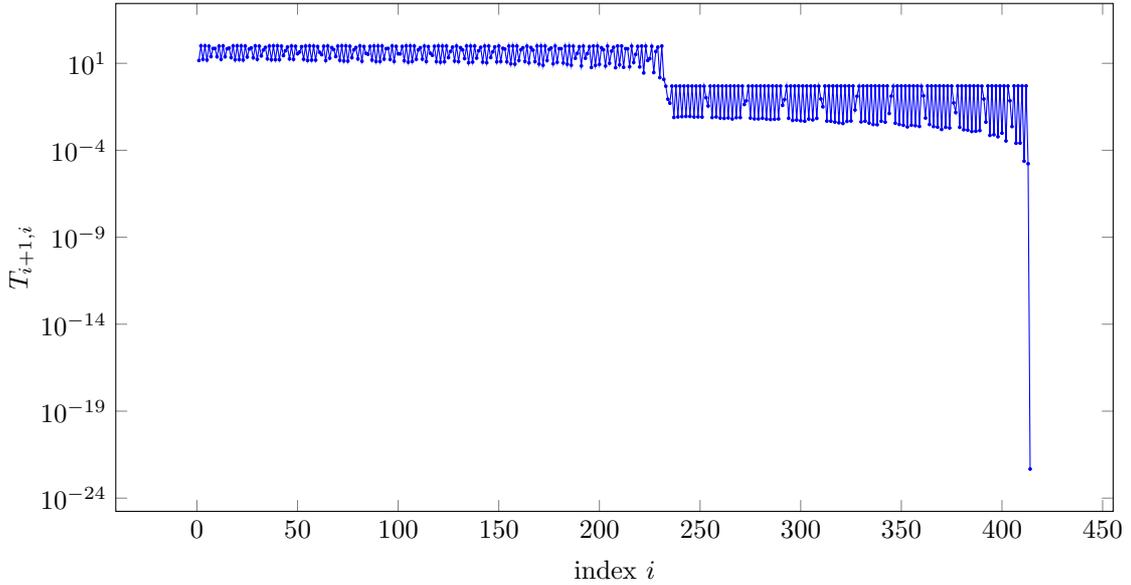
\begin{figure}
\begin{tikzpicture}
\begin{semilogyaxis}[width=\textwidth, height=0.4\textheight, xlabel={index $i$}, ylabel={$T_{i+1,i}$}, legend pos=north east]

\addplot+[mark size=0.5pt] table[x=idx,y=beta] \loadedtable;

\end{semilogyaxis}
\end{tikzpicture}
\caption{Subdiagonal  entries obtained for the colleague matrix of a random polynomial with $d=100,m=100$.} \label{fig:chebyshev}
\end{figure}
The first negligible subdiagonal entry is $T_{415,414}$, giving an invariant subspace of dimension $414$. Continuing the computation on $T$ reveals only $400$ nonzero eigenvalues, and returns a representation $C=A+UV\in\mathcal{H}_{200}$. Hence our experiment confirms that in this example the minimum rank of the correction is $2m=200$.

\subsection{Structure loss in computing the Schur form}

Given a companion matrix
$C$, in its Schur form $C=UTU^*$ the upper triangular  factor
$T$ is unitary-plus-rank-1, in exact arithmetic (since $C$ is so). As
described in the introduction, several numerical methods in the
literature try to exploit this structure, using special
representations to enforce exactly the unitary plus rank 1 structure.
If instead an approximation $\tilde{T}$ is computed using the standard
QR algorithm (Matlab's \texttt{schur(C)}), can we measure the loss of
structure in $\tilde{T}$, i.e., the distance between $\tilde{T}$ and
the closest matrix which is unitary-plus-rank-1?

We have run some experiments in which this distance is computed using the formula in Theorem~\ref{thm:distanceUk}, in two different cases:
\begin{itemize}
	\item The companion matrix of a polynomial whose roots are random numbers generated from a normal distribution with mean $0$ and variance $1$, i.e., Matlab's \texttt{compan(poly(randn(n, 1)))};
	\item The companion matrix of Wilkinson's polynomial, i.e., the polynomial with roots $1,2,\dots,n$.
\end{itemize}
The singular values of $\tilde{T}$ have been computed using extended precision arithmetic, to  get a more accurate result.

Figure~\ref{tbl:reldist} displays the (relative) distance from structure
\[
\frac{\norm{\tilde{T}-X}_2}{\norm{\tilde{T}}_2}, \quad X = \arg\min_{X \in \mathcal{U}_k} \norm{\tilde{T}-X}_2
\]
for  different matrix sizes $n$. This distance is always within a moderate multiple of the machine precision, which is to be expected because the Schur form is computed with a backward stable algorithm. It appears that the loss of structure is less pronounced for the Wilkinson polynomial; note, though, that $\norm{T}=\norm{C}$ is much larger (for $n=60$, $\norm{C}\approx 2\times 10^{83}$ for the Wilkinson polynomial vs.\ $\norm{C}\approx 3\times 10^{7}$ for the random polynomial).

Another interesting quantity is
\[
\frac{\norm{\tilde{T}-X}_2}{\norm{\tilde{T}-T}_2}, \quad X = \arg\min_{X \in \mathcal{U}_k} \norm{\tilde{T}-X}_2,
\]
i.e., the relative amount (measured as a fraction in $[0,1]$) of the total error on $\tilde{T}$ that can be attributed to the loss of structure. If this ratio is close to $0$, then it means that the main effect of the error is perturbing $T$ to another matrix \emph{inside} $\mathcal{U}_k$, while if it is close to $1$ then its main effect is perturbing it to a matrix \emph{outside} $\mathcal{U}_k$, and hence it would make sense to consider a projection procedure to map it back to $\mathcal{U}_k$.

We display this quantity in Figure~\ref{tbl:percent}. It is again smaller for the Wilkinson polynomial, and in both cases it seems to decrease slowly as the dimension $n$ increases.

\pgfplotstableread{
   n            reldist      reldistwilk  percent      percentwilk           
   4.0000e+00   5.8093e-16   2.7097e-16   4.7192e-01   2.8062e-01
   5.0000e+00   1.0323e-15   6.6203e-17   4.3350e-01   8.5813e-02
   6.0000e+00   3.7414e-16   1.6910e-17   9.9164e-02   4.2785e-02
   7.0000e+00   4.5583e-16   1.4195e-17   5.5818e-01   1.5958e-02
   8.0000e+00   4.0709e-16   2.0332e-17   3.7952e-01   1.2418e-02
   9.0000e+00   7.1055e-16   2.8192e-17   3.8956e-01   2.1434e-02
   1.0000e+01   3.1031e-16   4.5569e-17   1.3248e-01   5.8663e-02
   1.1000e+01   1.5711e-16   3.1074e-17   1.3752e-01   1.6914e-02
   1.2000e+01   2.4116e-16   9.6917e-18   1.3426e-01   1.3083e-02
   1.3000e+01   2.1500e-16   4.0197e-18   1.2237e-01   3.7613e-03
   1.4000e+01   1.8628e-16   1.3333e-17   1.4050e-01   8.0583e-03
   1.5000e+01   3.3065e-16   1.1237e-17   1.6648e-01   2.6594e-02
   1.6000e+01   4.8153e-16   2.1038e-17   1.0266e-01   8.9716e-03
   1.7000e+01   6.4314e-16   1.7142e-18   2.5603e-01   9.8197e-04
   1.8000e+01   3.5744e-16   1.1998e-17   8.0388e-02   4.6312e-03
   1.9000e+01   4.5118e-16   8.7340e-18   1.7134e-01   1.1376e-02
   2.0000e+01   2.8333e-16   1.2393e-17   2.0555e-01   3.1995e-03
   2.1000e+01   2.4100e-16   1.6978e-17   1.2820e-01   4.2806e-03
   2.2000e+01   2.8953e-16   4.3996e-17   1.3790e-01   2.4229e-02
   2.3000e+01   3.5149e-16   2.2403e-17   7.1473e-02   5.2873e-03
   2.4000e+01   2.8430e-16   5.4494e-18   1.6125e-01   6.5849e-03
   2.5000e+01   3.7628e-16   9.0800e-18   2.2740e-01   1.0670e-02
   2.6000e+01   1.9804e-16   1.1001e-17   1.3972e-01   1.8646e-02
   2.7000e+01   2.8134e-16   7.5054e-18   2.0841e-01   2.6674e-03
   2.8000e+01   3.5190e-16   2.4637e-17   1.3383e-01   9.9825e-03
   2.9000e+01   1.8523e-16   2.3556e-18   6.2543e-02   7.7741e-03
   3.0000e+01   2.3086e-16   9.5546e-18   4.6256e-02   2.7327e-03
   3.1000e+01   1.4260e-16   8.4196e-18   4.5918e-02   2.9413e-03
   3.2000e+01   1.5123e-16   3.4090e-18   9.8698e-02   2.6921e-03
   3.3000e+01   3.4171e-16   2.5426e-17   7.5770e-02   5.4324e-02
   3.4000e+01   1.9933e-16   2.4900e-17   9.6797e-02   1.2094e-02
   3.5000e+01   3.5556e-16   4.2968e-18   1.1022e-01   3.4685e-03
   3.6000e+01   2.9101e-16   3.7319e-18   8.2137e-02   1.7626e-03
   3.7000e+01   2.4436e-16   1.4170e-17   9.0056e-02   2.7263e-02
   3.8000e+01   2.1501e-16   1.0257e-17   1.2883e-01   7.6325e-03
   3.9000e+01   2.3615e-16   3.2687e-18   1.0200e-01   4.6984e-03
   4.0000e+01   1.1507e-16   8.8254e-18   8.0105e-02   7.9120e-03
   4.1000e+01   4.3842e-16   1.1581e-17   9.6308e-02   1.2826e-02
   4.2000e+01   3.6628e-16   9.6106e-18   6.5900e-02   1.1459e-02
   4.3000e+01   2.5107e-16   1.6646e-17   6.0673e-02   1.5962e-02
   4.4000e+01   2.7645e-16   2.6493e-18   6.5049e-02   8.4270e-04
   4.5000e+01   3.5288e-16   7.6078e-18   1.3038e-01   2.2309e-03
   4.6000e+01   3.7403e-16   2.1856e-17   5.5821e-02   1.6596e-02
   4.7000e+01   1.8070e-16   6.3429e-18   2.6321e-02   4.8094e-03
   4.8000e+01   2.9035e-16   1.5393e-18   8.3171e-02   9.2064e-04
   4.9000e+01   1.6712e-16   2.7956e-18   6.4492e-02   2.3032e-03
   5.0000e+01   1.8070e-16   1.1563e-16   1.5734e-01   6.0967e-02
   5.1000e+01   3.8625e-16   1.0043e-17   1.2514e-01   6.8034e-03
   5.2000e+01   1.2701e-16   6.1527e-18   2.0642e-02   1.4891e-03
   5.3000e+01   4.4801e-16   1.0238e-18   5.9433e-02   1.2917e-03
   5.4000e+01   3.1681e-16   9.5079e-19   4.2011e-02   2.9976e-04
   5.5000e+01   8.5301e-17   1.2032e-17   7.9995e-02   5.4309e-03
   5.6000e+01   3.2872e-16   4.1284e-18   8.4807e-02   6.3114e-03
   5.7000e+01   2.7951e-16   5.7482e-18   7.0772e-02   2.0444e-03
   5.8000e+01   3.3955e-16   4.1615e-18   7.2882e-02   1.4316e-03
   5.9000e+01   1.0740e-16   5.4511e-18   4.9937e-02   4.8842e-03
   6.0000e+01   1.2631e-16   1.4496e-18   2.8392e-02   7.5231e-04
}\loadedtable

\begin{figure}
\begin{tikzpicture}
\begin{semilogyaxis}[width=\textwidth, height=0.4\textheight, xlabel={$n$}, ylabel={$\frac{\norm{\tilde{T}-X}_2}{\norm{\tilde{T}}_2}$}, legend pos=south west]

\addplot table [x=n,y=reldist] \loadedtable;
\addplot table [x=n,y=reldistwilk] \loadedtable;
\legend{Random polynomial, Wilkinson polynomial};
\end{semilogyaxis}
\end{tikzpicture}
\caption{Relative distance from structure} \label{tbl:reldist}
\end{figure}
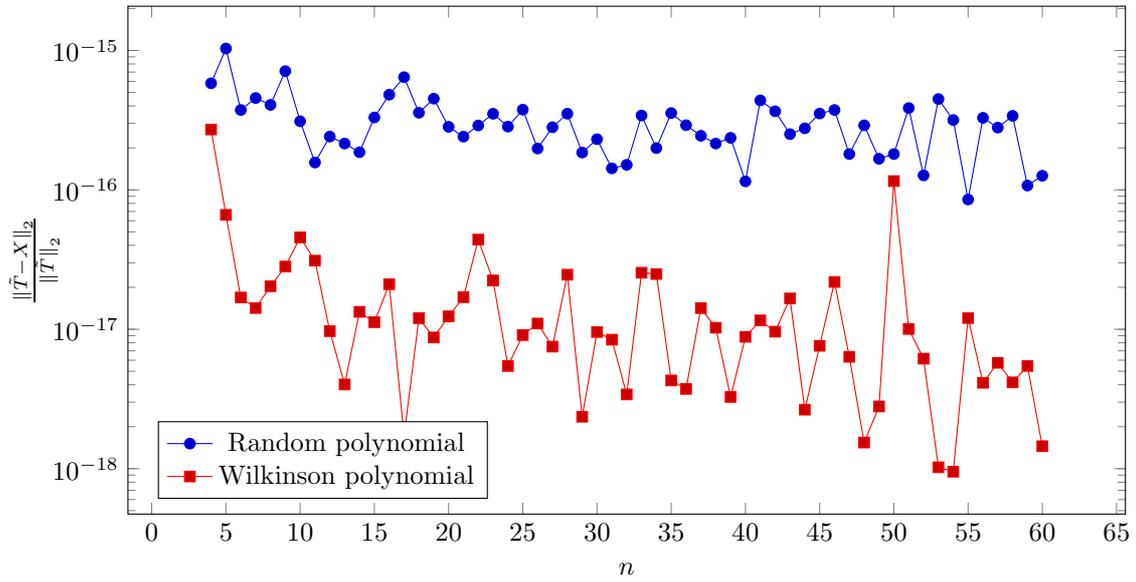

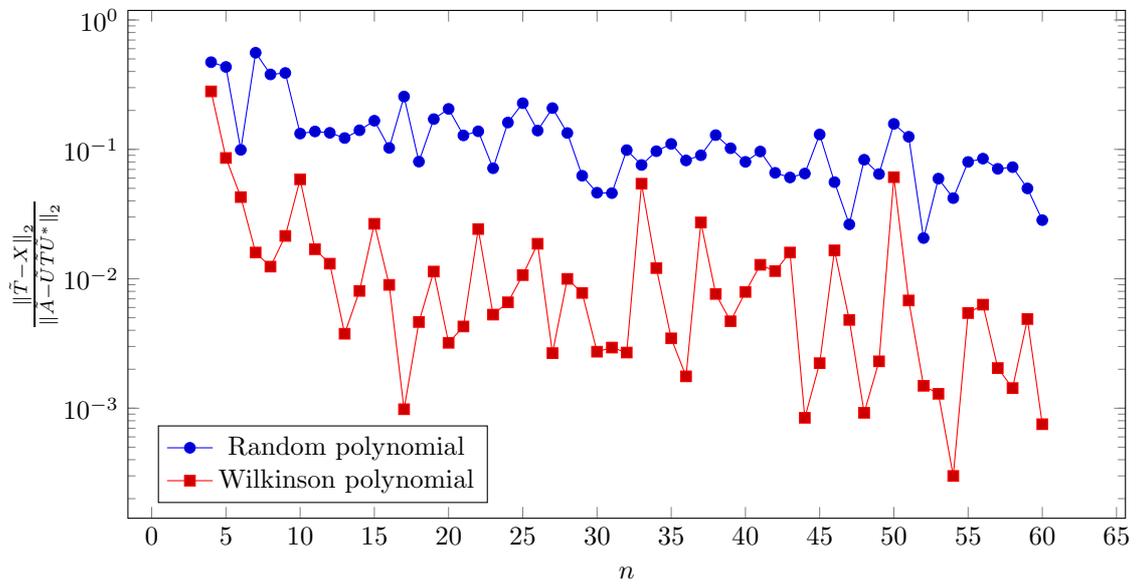
\begin{figure}
\begin{tikzpicture}
\begin{semilogyaxis}[width=\textwidth, height=0.4\textheight, xlabel={$n$}, ylabel={$\frac{\norm{\tilde{T}-X}_2}{\norm{\tilde{A}-\tilde{U}\tilde{T}\tilde{U}^*}_2}$}, legend pos=south west]

\addplot table [x=n,y=percent] \loadedtable;
\addplot table [x=n,y=percentwilk] \loadedtable;
\legend{Random polynomial, Wilkinson polynomial};
\end{semilogyaxis}
\end{tikzpicture}
\caption{Ratio of error due to structure loss} \label{tbl:percent}
\end{figure}

\section{Conclusions}
\label{sec:conc}
We have provided explicit conditions under which a matrix is unitary (resp.\ Hermitian) plus low rank,
and have given a construction for the closest unitary (resp.\ Hermitian) plus rank $k$ to a given matrix $A$,
in both the spectral and the Frobenius norm.

We have presented an algorithm based on the Lanczos iteration to construct explicitly, 
given a matrix $A \in \mathcal{H}_k$, (where $k$ is not known a priori),
a representation of the form $A=H+GB^*$, where $H$ is Hermitian and $GB^*$ is a rank-$k$ correction.
A variant for unitary-plus-low-rank matrices based on the Golub-Kahan bidiagonalization scheme has been presented as well. We tested these two algorithms on various examples coming from applications in numerical linear algebra, including a large-scale example.


\end{document}